\theoremstyle{plain}
\newtheorem*{thmA}{Theorem A}
\newtheorem*{thmB}{Theorem B}
\newtheorem{thm}{Theorem}[section]
\newtheorem{lem}[thm]{Lemma}
\newtheorem{pro}[thm]{Proposition}
\theoremstyle{definition}
\newtheorem{rmk}[thm]{Remark}
\numberwithin{equation}{section}
\newcommand{\F}{\mathbb{F}}
\newcommand{\Z}{\mathbb{Z}}
\newcommand{\N}{\mathbb{N}}
\newcommand{\TT}{\mathcal{T}}
\DeclareMathOperator{\Aut}{Aut}
\DeclareMathOperator{\Cl}{Cl}
\DeclareMathOperator{\st}{st}
\begin{document}

\title[GGS-groups as a source for Beauville surfaces]{Grigorchuk-Gupta-Sidki groups as a source for Beauville surfaces}

\author[\c{S}.\ G\"ul]{\c{S}\"ukran G\"ul}
\address{Department of Mathematics\\ University of the Basque Country UPV/EHU\\
48080 Bilbao, Spain}
\email{sukran.gul@ehu.eus}

\author[J.\ Uria-Albizuri]{Jone Uria-Albizuri}
\address{Basque Center for Applied Mathematics BCAM\\
	48009 Bilbao, Spain}
\email{juria@bcamath.org}

\keywords{Finite $p$-groups; Beauville $p$-groups; Beauville surfaces; authomorphisms of trees; GGS-groups.\vspace{3pt}}

\thanks{Authors acknowledge financial support from the Spanish Government, grants MTM2014-53810-C2-2-P and MTM2017-86802-P, and from the Basque Government, grant IT974-16.}

\begin{abstract}
If $G$ is a Grigorchuk-Gupta-Sidki group defined over a $p$-adic tree, where $p$ is an odd prime, we study the existence of Beauville surfaces associated to the quotients of $G$ by its level stabilizers $\st_G(n)$. We prove that if $G$ is periodic then the quotients $G/\st_G(n)$ are Beauville groups for every $n\geq 2$ if $p\geq 5$ and $n\geq 3$ if $p=3$. On the other hand, if $G$ is non-periodic, then none of the quotients $G/\st_G(n)$ are Beauville groups.
 \end{abstract}

\maketitle

\section{Introduction}

Groups acting on regular rooted trees have been widely studied since the 1980's, when the first Grigorchuk group was defined by Rostilav Grigorchuk \cite{gri}. This group was designed to be a counterexample to the General Burnside Problem, so that it is a finitely generated, periodic and infinite group. More importantly, it was the first example of a group having intermediate word growth \cite{gri2}, answering the Milnor Problem \cite{mil}. Later on many different examples and generalizations came into the literature. The interest on this kind of groups resides in their \textit{odd} properties, and because they have been useful to answer unsolved questions. 

Some of the examples that came up were the Gupta-Sidki groups \cite{GS} and the second Grigorchuk group \cite{gri}. The Grigorchuk-Gupta-Sidki groups (GGS-groups for short) are a family of groups generalizing them. These groups act on the regular $p$-adic rooted tree where $p$ is an odd prime. More concretely, each of them is generated by two automorphisms: a rooted automorphism $a$ permuting the vertices hanging from the root according to the permutation $(1\,2\,\dots\,p)$, and a recursively defined automorphism $b$ which is defined according to a given vector $\bold{e}=(e_1,\dots,e_{p-1})\in\F_p^{p-1}$. Vovkivsky \cite{vov} showed that $G$ is always infinite and it is a periodic group if and only if $\Sigma_{i=1}^{p-1}e_i=0$, so some of them are also counterexamples for the General Burnside Problem.

A group $G$ acting faithfully on a regular rooted tree $\TT$ is always residually finite, and a way to analyze the structure of a residually finite group is by looking at its finite quotients. For these groups there is a very natural family of normal subgroups of finite index, which are the level stabilizers $\st_G(n)$ for each $n\in\N$. The quotient $G/\st_G(n)$ can be naturally seen as a subgroup of the group of automorphisms of the subtree $\TT_n$ consisting of the first $n$ levels, since the kernel of the action of $G$ on $\TT_n$ is $\st_G(n)$.
For the GGS-groups these quotients have been well studied. For instance, in \cite{FZ} Fern\'andez-Alcober and Zugadi-Reizabal have given the sizes of these quotients, and  the profinite completion of each group has been compared to the completion with respect to the level stabilizers by Fern\'andez-Alcober, Garrido and Uria-Albizuri  in \cite{FGU}. The aim of this paper is to determine whether these quotients are Beauville groups or not.

A finite group $G$ is called a \emph{Beauville group} if it is a $2$-generator group and there exists a pair of generating sets $\{x_1,y_1\}$ and $\{x_2,y_2\}$ of $G$ such that 
$\Sigma(x_1,y_1) \cap \Sigma(x_2,y_2)=1$, where
\[
\Sigma(x_i,y_i)
=
\bigcup_{g\in G} \,
\Big( \langle x_i \rangle^g \cup \langle y_i \rangle^g \cup \langle x_iy_i \rangle^g \Big),
\]
for $i=1, 2$. Then we say that $ \{x_1,y_1\}$ and $\{x_2,y_2\}$ form a \emph{Beauville structure\/} for $G$.

Every Beauville group gives rise to a complex surface of general type which is known as a \emph{Beauville surface}. Roughly speaking, a Beauville surface is a compact complex surface defined by taking a pair of complex curves $C_1$ and $C_2$ and letting a finite group $G$, which is called a Beauville group, act freely on their product to define the surface as the quotient $(C_1 \times C_2)/G$.

Beauville groups have been intensely studied in recent times; see surveys \cite{fai, jon}. 
For example, the abelian Beauville groups were classified by Catanese [4]: a finite abelian group $G$ is a Beauville group if and only if $G\cong C_n \times C_n$ for $n> 1$ with $\gcd(n, 6) = 1$. After abelian groups, the most natural class of finite groups to consider are nilpotent groups. The study of nilpotent Beauville groups is reduced to that of Beauville $p$-groups.

If $p$ is a prime, the state of the art on Beauville $p$-groups can be found in the survey papers \cite{BBF}, \cite{bos} and \cite{fai2}. The smallest non-abelian Beauville $p$-groups were determined by Barker, Boston and Fairbairn in \cite{BBF}. On the other hand, in \cite{FG}, Fern\'andez-Alcober and G\"ul extended Catanese's criterion in the case of $p$-groups from abelian groups to  finite $p$-groups having a `nice power structure', including in particular $p$-groups of class $<p$.

Also in \cite{BBF}, it was shown that there are non-abelian Beauville $p$-groups of order $p^n$ for every $p\geq5$ and every $n\geq3$. The first explicit infinite family of Beauville $2$-groups was constructed in \cite{BBPV}. Recently in \cite{SV}, Stix and Vdovina constructed an infinite series of Beauville $p$-groups, for every prime $p$, by considering quotients of ordinary triangle groups. In particular this gives examples of non-abelian Beauville $p$-groups of arbitrarily large order. On the other hand, in \cite{gul}, G\"ul showed that quotients by the terms of the lower $p$-central series in either the free group of rank $2$ or in the free product of two cyclic groups of order $p$ are Beauville groups. In \cite{FG}, quotients of the Nottingham group over $\F_p$ have been studied in order to construct more infinite families of Beauville $p$-groups, for an odd prime $p$.

Note that if $G$ is a GGS-group, then the quotients of $G$ by its level stabilizers $\st_G(n)$ are finite $p$-groups generated by two elements of order $p$ but whose exponent can be arbitrarily high. As a consequence, they do not fit into the family of finite $p$-groups having a `nice power structure'. For this reason, they are natural candidates to search for Beauville $p$-groups of a very different type from the ones in \cite{FG}. It turns out that the property of being Beauville for these quotients depends on whether $G$ is periodic or not.

The main results of this paper are as follows.

\begin{thmA}
Let $G$ be a periodic GGS-group over the $p$-adic tree. Then the quotient $G/\st_G(n)$ is a Beauville group if $p \geq5$ and $n\geq2$,  or $p=3$ and $n\geq 3$.
\end{thmA}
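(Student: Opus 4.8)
The plan is to exhibit explicit pairs of generators of $G/\st_G(n)$ whose associated $\Sigma$-sets meet trivially. Since $G$ is a $2$-generator pro-$p$ group (topologically generated by $a$ and $b$), each finite quotient $G/\st_G(n)$ is a $2$-generator $p$-group, so a pair $\{x,y\}$ generates $G/\st_G(n)$ precisely when it generates modulo the Frattini subgroup; this reduces the generation condition to a linear-algebra check over $\F_p$ in $G/\Phi(G)\cong\F_p^2$. For the first pair I would take the images of $x_1=a$ and $y_1=b$. For the second pair I would take $x_2=a^{i}b^{j}$ and $y_2=a^{k}b^{l}$ for suitable $i,j,k,l\in\F_p$ chosen so that $\{x_2,y_2\}$, $\{x_1,y_1\}$ and the three products $x_1y_1$, $x_2y_2$ are pairwise ``independent'' modulo $\Phi(G)$ — this is where the hypothesis $p\ge 5$ versus $p=3$ enters, since for $p=3$ there are too few lines in $\F_3^2$ to separate all the required cyclic factors, which forces the passage to $n\ge 3$ (where one has more room, working modulo a deeper stabilizer). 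It will be convenient to also record that conjugation in $G/\st_G(n)$ does not collapse these distinctions, i.e. to understand $\langle x\rangle^g$ for the relevant $x$.

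The heart of the argument is to show $\Sigma(x_1,y_1)\cap\Sigma(x_2,y_2)=1$. A direct description of all conjugates is hopeless, so instead I would use the standard ``orbit'' reformulation: an element lies in $\Sigma(x_i,y_i)$ iff some power of it is conjugate to a power of $x_i$, $y_i$ or $x_iy_i$. So it suffices to prove that no nontrivial power of a conjugate of one of $x_1,y_1,x_1y_1$ is conjugate to a power of one of $x_2,y_2,x_2y_2$. I would attack this through the action on the tree: project to $G/\st_G(1)\cong C_p$ to handle elements of $G/\st_G(n)$ that act nontrivially on the first level (these are, up to the permutational part, ``of type $a$'' and their orders are controlled), and for elements lying in $\st_G(1)/\st_G(n)$ use the embedding into $\prod_{i=1}^{p}\bigl(G/\st_G(n-1)\bigr)$ given by the sections at the first-level vertices, together with the periodicity hypothesis $\sum e_i=0$ to pin down orders and conjugacy of the relevant elements (in particular, to guarantee that $b$ and its conjugates have bounded, well-understood $p$-power order, via Vovkivsky's theorem). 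Induction on $n$ is natural here, using the self-similar structure: the sections of $x_1,y_1,x_2,y_2$ and their products at the first level are again (conjugates of) words in $a$ and $b$ in $G/\st_G(n-1)$, to which the inductive hypothesis applies.

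Concretely I would organize the final step as a case analysis according to the first-level behaviour of a putative common element $g$: if $g\notin\st_G(1)$ then some power of $g$ is a nontrivial rooted-type element, and one checks these cannot simultaneously be reached from both Beauville triples by a counting/normal-form argument in $G/\st_G(2)$ (this is exactly why the case $p=3$ needs $n\ge 3$); if $g\in\st_G(1)$, pass to the sections and invoke induction, the base case being the explicit verification in $G/\st_G(2)$ for $p\ge 5$ and in $G/\st_G(3)$ for $p=3$.

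\medskip

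\noindent\textbf{Main obstacle.} The chief difficulty is controlling conjugacy of $p$-power-elements inside $\st_G(1)/\st_G(n)$: unlike the ``nice power structure'' situation, here $\langle b\rangle$ can have large exponent and powers of conjugates of $b$ can be intricate, so the delicate point is to verify — presumably using $\sum e_i=0$ and the recursive description of $\st_G(1)$ — that a power of a conjugate of $b$ (or $ab$, etc.) never lands in a conjugate of $\langle a^kb^l\rangle$; making the $p=3$ obstruction precise (and confirming that $n=3$ already suffices) is the part I expect to require the most care.
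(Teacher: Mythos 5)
Your plan has two genuine gaps. First, the mechanism you propose for the second generating pair --- choosing $x_2=a^ib^j$, $y_2=a^kb^l$ so that both triples are pairwise independent modulo the Frattini subgroup --- cannot work for $p=3$, and your suggested fix (``passing to $n\ge 3$ gives more room'') does not exist: $G/\st_G(n)$ is $2$-generated for every $n$, so its Frattini quotient is $\F_3^2$ for every $n$, with only four maximal subgroups, while two generating triples $\{x_1,y_1,x_1y_1\}$ and $\{x_2,y_2,x_2y_2\}$ would need six distinct lines. So for $p=3$ the two triples must always share maximal subgroups, at every level $n$. The paper's $p=3$ structure on $G/\st_G(3)$ indeed takes $\{a,b\}$ and $\{av,b^2u\}$ with $a\equiv av$ and $b\equiv b^2u$ modulo $\Phi(G_3)$ (here $u$ generates $Z(G_3)$ and $\psi_3(v)=([a,b],1,1)$), and forces trivial intersection by a different mechanism: Lemma 3.8 of \cite{FG} (if $t\in\Phi(G)$ is not of the form $[x,g]$, then the conjugates of $\langle x\rangle$ and $\langle xt\rangle$ meet trivially), backed by explicit computations of $Z(G_3)$ and of which elements are commutators with $a$ or $b$. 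None of this is in your outline, and without some such device the $p=3$, $n\ge 3$ case cannot be reached by your route.

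Second, the induction on $n$ via first-level sections does not go through as stated, and the key step it would rest on is exactly the point you defer. If a putative common element $g$ lies in $\st_{G_n}(1)$, it is a power of a conjugate of one of your six elements, but its first-level sections are then not powers of conjugates of your chosen generators of $G/\st_G(n-1)$ (for instance the sections of $(ab^i)^p$ are conjugates of $b^i$, not of $ab^i$), so the inductive hypothesis --- that your specific pairs form a Beauville structure of $G/\st_G(n-1)$ --- gives no usable conclusion; you would need a substantially stronger inductive statement. The paper avoids induction altogether: it proves the case $n=3$ (for $p\ge5$ with the pairs $\{a^{-2},ab\}$, $\{ab^2,b\}$) and then lifts the structure to every $n\ge3$ by the Fuertes--Jones criterion (Lemma 4.2 of \cite{FJ}), which only requires that the orders of $x_1$, $y_1$, $x_1y_1$ be unchanged in $G/\st_G(n)$ --- automatic here because periodicity ($\sum e_i=0$) makes $ab^i$ and $a^{-1}b$ of order exactly $p^2$ already in $G$. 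Moreover the technical heart of the $n=3$ case, which you correctly flag as the ``main obstacle'' but do not resolve, is precisely the paper's Proposition 3.4: if $\langle (ab^i)^p\rangle=\langle (ab^j)^p\rangle^g$ in $G/\st_G(3)$ then $i=j$, proved by an explicit $\psi_3$-computation exploiting $\sum e_i=0$ and the abelianness of $\st_{G_2}(1)$. (For the base case $n=2$, $p\ge5$, note also that no explicit verification is needed: $G_2$ is regular of maximal class and exponent $p$, so the criterion of \cite{FG} applies directly.) As it stands, the proposal is a programme whose two load-bearing steps --- the $p=3$ construction and the passage from level $3$ to level $n$ --- are either missing or based on an incorrect expectation.
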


\begin{thmB}
Let $G$ be a non-periodic GGS-group over the $p$-adic tree. Then the quotient $G/\st_G(n)$ is not a Beauville group for any $n \geq 1$.
\end{thmB}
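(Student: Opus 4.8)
The plan is to exploit the structure of a non-periodic GGS-group $G$, where by Vovkivsky's criterion $\sum_{i=1}^{p-1} e_i \neq 0$ in $\F_p$. The key obstruction to the Beauville property will be the existence of a single ``unavoidable'' element: I expect that in this non-periodic case there is a nontrivial element $w \in G/\st_G(n)$ of order $p$ lying in $\Sigma(x,y)$ for \emph{every} generating pair $\{x,y\}$, which forces $\Sigma(x_1,y_1)\cap\Sigma(x_2,y_2)\neq 1$ and kills all Beauville structures. The natural candidate is (the image of) a suitable power of $b$, or rather an element manufactured from $b$ via the non-vanishing of $\sum e_i$: when $\sum e_i \neq 0$, one can use the recursive/contraction structure of the GGS-group to show that $a$ itself (or a conjugate of it) appears forced in the cyclic subgroups generated by any generating set, because every generating pair of the $p$-group $G/\st_G(n)$ must, modulo the Frattini subgroup, project onto a generating pair of $(G/\st_G(n))^{\mathrm{ab}}$, and the abelianization is controlled by the $a$-coordinate and the $b$-coordinate.

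First I would recall from \cite{FZ} the precise structure of $G/\st_G(n)$, in particular its order, its maximal subgroups, and the structure of its Frattini quotient $G/\Phi(G)\st_G(n)\cong C_p\times C_p$ generated by the images of $a$ and $b$. Since $G/\st_G(n)$ is a $2$-generated $p$-group, a pair $\{x,y\}$ generates if and only if $\{\bar x,\bar y\}$ generates the Frattini quotient $C_p\times C_p$; hence up to relabelling and taking powers, one of $x,y,xy$ must have nontrivial image of the form $a^i b^j$ with $i\not\equiv 0$. Second, I would analyze what $\langle a^i b^j\rangle$ (more precisely the full conjugacy-closed set $\bigcup_g \langle a^i b^j\rangle^g$) looks like inside $G/\st_G(n)$ in the non-periodic case: the aim is to locate an explicit nontrivial element, independent of $i,j$ and of the conjugating elements, that always lies there. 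The natural tool is the order of $a^ib^j$ together with the first-level decomposition $\psi\colon\st_G(1)\to G\times\overset{p}{\cdots}\times G$ and the fact that when $\sum e_i\neq 0$ the element $b$ (and its translates) have unbounded order, so high powers of $a^ib^j$ reach into deep levels in a controlled way.

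Third, once the unavoidable element $w$ is pinned down, the conclusion is immediate: for any two putative generating pairs, $\Sigma(x_1,y_1)$ and $\Sigma(x_2,y_2)$ both contain $w\neq 1$, so no Beauville structure exists, and this holds for every $n\geq 1$ (for $n=1$ the quotient is $C_p$, not even $2$-generated in the required sense, so the statement is vacuous or trivial there). The main obstacle, and where the real work lies, is the second step: proving that the forced element is genuinely \emph{independent of the conjugating elements $g\in G/\st_G(n)$}, i.e.\ that it survives in $\bigcap_g\langle a^ib^j\rangle^g$ or at least appears in $\Sigma(a^ib^j, \cdot)$ for structural reasons. I expect this to require a careful induction on $n$ using the GGS-contraction, distinguishing the behaviour of the $a$-part (which is periodic of period $p$ on the top level) from the $b$-part (whose non-periodicity is exactly the failure $\sum e_i\neq 0$), and showing that the two cannot be simultaneously avoided by a single generating pair. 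A secondary subtlety is handling the case $p=3$ separately if the abelianization or Frattini structure degenerates there, mirroring the exceptional role $p=3$ plays in Theorem A.
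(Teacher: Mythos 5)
Your overall strategy---exhibit a nontrivial cyclic subgroup of order $p$ that is forced into $\Sigma(x,y)$ for \emph{every} generating pair of $G_n=G/\st_G(n)$---is exactly the strategy of the paper, and the reduction through the Frattini quotient is the right first move. But two things need repair. First, the Frattini step must be sharpened: since $\Phi(G_n)=G_n'$ and the images of $x$, $y$, $xy$ span three \emph{distinct} lines of $G_n/G_n'\cong C_p\times C_p$, while only two of the $p+1$ lines are the $\langle a\rangle$- and $\langle b\rangle$-axes, at least one of $x$, $y$, $xy$ lies in $M_{n,i}\smallsetminus G_n'$ with $M_{n,i}=\langle ab^i, G_n'\rangle$ and $1\le i\le p-1$. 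Your condition ``image $a^ib^j$ with $i\not\equiv 0$'' is too weak: it does not exclude the maximal subgroup $\langle a, G_n'\rangle$, and the argument genuinely needs both exponents nonzero. Second, your candidates for the unavoidable element ($a$, a conjugate of $a$, a power of $b$) are not the right ones; the unavoidable subgroup is $\langle (ab)^{p^{n-1}}\rangle$, which for $n=2$ is $Z(G_2)$, where $G_2\cong C_p\wr C_p$ has exponent $p^2$ because $\sum e_i\neq 0$ forces the circulant matrix $C(\mathbf{e},0)$ to have full rank $p$.

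The real gap is that the step you yourself flag as ``where the real work lies'' is exactly the content of the paper's key proposition and is not supplied: one must prove that every $g\in M_{n,i}\smallsetminus G_n'$ has order $p^n$ and that $\langle g^{p^{n-1}}\rangle$ is \emph{one and the same} subgroup for all such $g$ and all $i=1,\dots,p-1$ (which subsumes independence of the conjugator, since conjugates of $g$ stay in $M_{n,i}\smallsetminus G_n'$). The paper proves this by induction on $n$: writing $\alpha=\sum e_i\neq 0$, one computes that every component of $\psi(g^{p})$ for $g=(ab^i)^kw$, $w\in G_n'$, has the form $(a^{\alpha}b)^{ki}u$ with $u\in G'$ (here $\st_G(2)=[\st_G(1),\st_G(1)]$ is used to control the error term $w$), so each component lands again in a generic maximal subgroup of the quotient one level down with the same exponent pattern, and the base case is the explicit structure of $C_p\wr C_p$. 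Without this computation the claim that the forced element survives all choices of $i$, $j$, $k$ and conjugation is unsubstantiated. Finally, your caution about $p=3$ is unnecessary here: the non-periodic argument is uniform in the odd prime $p$, and Theorem B has no exception at $3$.
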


Theorem A shows that a periodic GGS-group is a source for the construction of an infinite series of Beauville $p$-groups. This gives  yet another reason why GGS-groups constitute an important family in group theory.

\vspace{10pt}

\noindent
\textit{Notation.\/}
 If $G$ is a group, then we denote by $\Cl_G(x)$ the conjugacy class of the element $x\in G$. Also, if $p$  is a prime, then the exponent of a $p$-group $G$, denoted by $\exp G$, is the maximum of the orders of all elements of $G$.

\section{Definitions and Preliminaries}

In this section, we will establish a few properties of GGS-groups that will help us prove the main theorems of this paper. Before proceeding, we recall some facts about automorphisms of rooted trees.

If $d\geq 2$ is an integer and $X = \{1, . . . , d\}$, the \emph{$d$-adic tree $\TT$} is the rooted tree whose set of vertices is the free monoid $X^*$, where the root corresponds to the empty word $\emptyset$, and a word $u$ is a descendant of $v$ if $u = vx$ for some $x \in X$. The set $L_n$ of all vertices of length $n$ is called the \emph{$n$th level of $\TT$}, for every integer $n\geq 0$. If we consider only words of length $\leq n$, then we have a finite tree $\TT_n$, which we refer to as \emph{the tree $\TT$ truncated at level $n$}.

An \emph{automorphism of $\TT$} is a bijection of the vertices that preserves incidence. The group of automorphisms of $\TT$ is denoted by $\Aut \TT$. The subgroup $\st(n)$ of $\Aut \TT$ consisting of the automorphisms that fix pointwise $L_n$ is called the \emph{nth level stabilizer}. More generally, if $G \leq \Aut \TT$, we define $\st_G(n)=\st(n)\cap G$.

If an automorphism $g$ fixes a vertex $u$, then the restriction of $g$ to the subtree hanging from $u$ induces an automorphism $g_u$ of $\TT$. In particular, if $g\in \st(1)$ then $g_i$ is defined for every 
$i = 1,...,d$, and we have an isomorphism
\begin{align*} 
\psi \colon \st(1) & \longrightarrow \Aut \TT \times  \overset{d}{\ldots} \times  \Aut \TT\\ 
g & \longmapsto (g_1, \dots, g_d).
\end{align*}

An important automorphism of $\TT$ is the automorphism that permutes the $d$ subtrees hanging from the root rigidly according to the permutation $(1  2 . . .  d)$. This is called a \emph{rooted automorphism} and will be denoted by the letter $a$. Since $a$ has order $d$, it makes sense to write $a^k$ for $k \in \Z/d\Z$. Now, given a non-zero vector $\bold{e} = (e_1, . . . , e_{d-1})\in (\Z/d\Z)^{d-1}$, we can
define recursively $b \in \st(1)$ via
\[
\psi(b)= (a^{e_1}, \dots, a^{e_{d-1}}, b).
\]
Then the subgroup $G=\langle a, b\rangle$ of $\Aut \TT$ is called the \emph{GGS-group} corresponding to the \emph{defining vector} $\bold{e}$.

If $d=2$ then there is only one GGS-group, which is isomorphic to the infinite dihedral group $D_{\infty}$. In case $d=3$ there are only three essentially different defining vectors $\bold{e}$, and they are $(1, 0)$, $(1,1)$ and $(1,2)$. The corresponding groups are called  the Fabrykowski-Gupta group, the Bartholdi-Grigorchuk group and the Gupta-Sidki group, respectively. If $d=4$ then one example of GGS-group is the second Grigorchuk group, where the defining vector is $(1,0,1)$.

In the remainder of this paper, we will asume that $d=p$ for an odd prime $p$. Let $G=\langle a, b\rangle$ be a GGS-group with defining vector $\bold{e}=(e_1, \dots, e_{p-1})$. 
Observe that both $a$ and $b$ are of order $p$. For every integer $i$, we write $b_i=b^{a^i}$. Notice that  $b_i = b_j$  if $i \equiv j \pmod{p}$.
The images of the elements $b_i$ under the map $\psi$ can be described as:
\begin{equation}
\begin{aligned}
&\psi(b_0)= (a^{e_1}, \dots, a^{e_{p-1}}, b), \\
& \psi(b_1)= (b, a^{e_1}, \dots, a^{e_{p-1}}),\\
&\vdots \\
& \psi(b_{p-1})= (a^{e_2}, a^{e_3}, \dots,b, a^{e_1}).
\end{aligned}
\end{equation}

Here we collect some basic results regarding GGS-groups.

\begin{pro}\cite[Theorem 2.1]{FZ} 
\label{description of G}
If $G=\langle a, b\rangle$ is a GGS-group, then 
\begin{enumerate}
\item 
$\st_G(1)=\langle b \rangle^{G}= \langle b_0, \dots, b_{p-1} \rangle$, and  $G= \langle a \rangle \ltimes \st_G(1)$.
\item 
$\st_G(2) \leq G' \leq \st_G(1)$.
\item 
$|G:G'|=p^2$ and $|G: \gamma_{3}(G)|=p^3$.
\end{enumerate}
\end{pro}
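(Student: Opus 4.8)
The statement to prove, Proposition~\ref{description of G}, collects three structural facts about a GGS-group $G=\langle a,b\rangle$ acting on the $p$-adic tree, and the plan is to establish them in the given order, since each part feeds into the next.

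\medskip

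\noindent\textbf{Part (i).} First I would observe that $b\in\st_G(1)$ by construction, and since $a$ has order $p$ and acts nontrivially on the first level (the quotient $G/\st_G(1)$ is a transitive subgroup of the cyclic group generated by the rooted permutation $(1\,2\,\dots\,p)$, hence is all of $\Z/p\Z$), we get a split extension $G=\langle a\rangle\ltimes\st_G(1)$ with $|G:\st_G(1)|=p$. The normal closure $\langle b\rangle^G$ is contained in $\st_G(1)$ because $\st_G(1)\trianglelefteq G$, and it contains $\langle a\rangle\ltimes\langle b\rangle^G = G$'s commutator-type data; more directly, $G=\langle a, b\rangle = \langle a\rangle\langle b\rangle^G$, so $\langle b\rangle^G$ has index dividing $p$ in $G$, hence equals $\st_G(1)$. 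Since $\langle b\rangle^G$ is generated by the conjugates $b^{a^i}=b_i$ for $i=0,\dots,p-1$ (all other conjugates by elements of $\st_G(1)$ being expressible in terms of these together with further applications, but in fact $a$-conjugates suffice because $G=\langle a\rangle\ltimes\st_G(1)$ and $\st_G(1)$ normalizes each needed piece), we obtain $\st_G(1)=\langle b_0,\dots,b_{p-1}\rangle$.

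\medskip

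\noindent\textbf{Part (ii).} The inclusion $G'\leq\st_G(1)$ is immediate because $G/\st_G(1)\cong\Z/p\Z$ is abelian. For $\st_G(2)\leq G'$, I would pass through $\psi$: an element of $\st_G(2)$ lies in $\st_G(1)$, so write a general element of $\st_G(1)$ as $b_0^{r_0}\cdots b_{p-1}^{r_{p-1}}$ and compute its image under $\psi$ using the displayed formulas for $\psi(b_i)$. Being in $\st_G(2)$ forces each of the $p$ coordinates to lie in $\st_G(1)$, and since the coordinates are products of the $a^{e_j}$'s and $b$, the $a$-parts in each coordinate must have total exponent $\equiv 0\pmod p$. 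Counting these conditions and comparing with the abelianization $G/G'$ (which by part (iii) has order $p^2$, generated by the images of $a$ and $b$) shows every element of $\st_G(2)$ maps to the trivial element of $G/G'$; equivalently, the "column sum" conditions defining $\st_G(2)$ inside $\st_G(1)$ all lie in the kernel of $\st_G(1)\to G/G'$.

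\medskip

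\noindent\textbf{Part (iii).} For $|G:G'|=p^2$: the abelianization is generated by the images $\bar a,\bar b$, each of order dividing $p$, so $|G:G'|\mid p^2$; the reverse inequality follows by exhibiting a surjection $G\to\Z/p\Z\times\Z/p\Z$, e.g. the map recording the exponent sum of $a$ and (via the abelianized tree action) a suitable coordinate functional, which shows $\bar a$ and $\bar b$ are independent. For $|G:\gamma_3(G)|=p^3$, I would compute $\gamma_2(G)/\gamma_3(G)$: it is cyclic of order $p$, generated by the image of $[a,b]$, because $G/\gamma_3(G)$ is a $2$-generated nilpotent group of class $2$ over $\F_p$, whose commutator subgroup is cyclic generated by $[a,b]$, and one checks $[a,b]\notin\gamma_3(G)$ by a direct $\psi$-computation (its image has a nontrivial entry modulo the second level). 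Combining $|G:G'|=p^2$ with $|G':\gamma_3(G)|=p$ gives $|G:\gamma_3(G)|=p^3$.

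\medskip

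\noindent The main obstacle is Part (ii): showing $\st_G(2)\leq G'$ requires genuinely using the defining vector $\bold e$ and the explicit $\psi$-formulas, rather than soft arguments, and one must be careful that the "column sum zero" conditions are exactly the right count to land inside $G'$ (this is where the hypothesis $p$ odd and $\bold e\neq 0$ enters). Since this is cited as \cite[Theorem 2.1]{FZ}, I would in practice reproduce the computation from that source, checking that the coordinate analysis under $\psi$ closes up correctly.
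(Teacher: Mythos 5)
First, a remark on the comparison: the paper offers no proof of this proposition at all — it is imported verbatim from \cite[Theorem 2.1]{FZ} — so there is no in-paper argument to measure you against and your proposal has to stand on its own. Part (i) is essentially fine: the clean way to finish is to note that $N=\langle b_0,\dots,b_{p-1}\rangle$ is normalized by $a$ (which permutes the $b_i$ cyclically) and contains $b$, so $\langle a\rangle N=G$ and $N\trianglelefteq G$, whence $N=\langle b\rangle^G=\st_G(1)$; your parenthetical justification gestures at this but is muddled. The genuine gap is in part (ii), precisely at the step you cover with ``counting these conditions and comparing with the abelianization''. What you actually have is: for $g\in\st_G(2)$ with exponent sum $m_i$ in $b_i$ (the normal form $b_0^{r_0}\cdots b_{p-1}^{r_{p-1}}$ you write does not exist since $\st_G(1)$ is not abelian, but exponent sums suffice because all relevant maps factor through the abelianization), the image of $g$ in $G/G'$ is $\bar b^{\,m_0+\dots+m_{p-1}}$, while pushing each coordinate of $\psi(g)$ into $\Aut\TT/\st(1)\cong\Z/p\Z$ turns $g\in\st(2)$ into $C\mathbf{m}=0$ for $C=C(\bold{e},0)$. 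To conclude $g\in G'$ you must prove that every vector in $\ker C$ has coordinate sum $\equiv 0\pmod p$. That is a genuine lemma about circulant matrices over $\F_p$: identifying $\F_p^{\,p}$ with $\F_p[X]/(X^p-1)=\F_p[X]/(X-1)^p$, the kernel consists of the multiples of $(X-1)^{p-m}$, where $m\le p-2$ is the multiplicity of $1$ as a root of $e_1+e_2X+\dots+e_{p-1}X^{p-2}$ (cf.\ Lemma~\ref{rank of C}); since $p-m\ge 1$, every kernel element is divisible by $X-1$, hence vanishes at $X=1$, hence has zero coordinate sum. Without this identification the argument does not close, and no count of conditions substitutes for it.

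Part (iii) has a similar soft spot. The lower bound $|G:G'|\ge p^2$ is exactly the existence of your ``suitable coordinate functional'', which is the nontrivial content: in the non-periodic case the sum of the level-two labels gives a homomorphism $G\to\Z/p\Z$ killing $a$ but not $b$, but in the periodic case ($\sum e_i=0$) that functional vanishes identically on $\st_G(1)$ and one must argue differently (e.g.\ via deeper levels, or via the maximal-class structure of $G/\st_G(2)$). Likewise, ``$[a,b]\notin\gamma_3(G)$ because its image has a nontrivial entry'' is not an argument — a nontrivial $\psi$-image is no obstruction to membership in $\gamma_3(G)$; one needs an explicit quotient of order $p^3$ and class exactly $2$. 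Finally, as written you invoke part (iii) inside the proof of part (ii), while your proof of part (iii) is itself incomplete; the circularity is avoidable (once $\sum m_i\equiv 0$ you get $g\equiv b^0=1$ in $G/G'$ with no knowledge of $|G:G'|$), but it should be excised.
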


\begin{rmk}
\label{image of st_G(k)}
For all $k\geq 1$ we have $ \psi(\st_G(k)) \subseteq  \st_G(k-1)\times  \overset{p}{\ldots} \times  \st_G(k-1)$.
\end{rmk}

\begin{rmk}
\label{image of st_{G_n}(k)}
For every positive integer $n$, we can define an isomorphism $\psi_n$ from the stabilizer
of the first level in $\Aut \TT_n$ to the direct product 
$\Aut \TT_{n-1} \times  \overset{p}{\ldots} \times \Aut \TT_{n-1}$, in the same way as $\psi$ is defined.
Since $G_n=G/\st_G(n)$ can be seen as a subgroup of $\Aut \TT_n$, we can
consider the restriction of $\psi_n$ to $\st_{G_n}(1)$. Then by the previous remark, we have
\[
\psi_n(\st_{G_n}(k)) \subseteq  \st_{G_{n-1}}(k-1)\times  \overset{p}{\ldots} \times \st_{G_{n-1}}(k-1).
\]
For simplicity, throughout this paper we are not going to use bar notation for the elements in the quotient groups $G/\st_G(n)$. However, the subscript $n$ at the map $\psi_n$ means that we are working in the quotient group $G/\st_G(n)$.
\end{rmk}

If $\bold{e}=(e_1, \dots, e_{p-1})$ is the defining vector of a GGS-group, then we write $C(\bold{e}, 0)$ for the circulant matrix $C(e_1,...,e_{p-1}, 0)$ over $\F_p$.

\begin{pro}\cite[Theorem 2.4]{FZ} 
\label{properties of G2}
Let $G$ be a GGS-group with defining vector $\bold{e}$, and put $C=C(\bold{e}, 0)$. Then 
\begin{enumerate}
\item 
The dimension of $\st_{G_2}(1)$ coincides with the rank $t$ of $C$.
\item 
$G_2$ is a $p$-group of maximal class of order $p^{t+1}$.
\end{enumerate}
\end{pro}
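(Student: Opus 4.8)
The plan is to determine the structure of $G_2 = G/\st_G(2)$ by pushing it through the map $\psi_2$. Write $V = \st_{G_2}(1) = \st_G(1)/\st_G(2)$, so that $V = \langle b_0, \dots, b_{p-1}\rangle$ by Proposition~\ref{description of G}(i). By Remark~\ref{image of st_{G_n}(k)}, $\psi_2$ restricts to an injective homomorphism on $V$ with image inside $\st_{G_1}(0) \times \overset{p}{\ldots} \times \st_{G_1}(0) = G_1 \times \overset{p}{\ldots} \times G_1$, and $G_1 = G/\st_G(1) = \langle a\rangle$ has order $p$; this already shows that $V$ is elementary abelian, so it makes sense to speak of its dimension, and $\psi_2$ identifies it with a subspace of $\F_p^p$. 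Reading the displayed formulas for $\psi(b_i)$ modulo $\st_G(1)$ in each coordinate annihilates every occurrence of $b$ and turns each $a^{e_j}$ into the scalar $e_j$, so $\psi_2(b_i)$ is the $i$-th cyclic shift of the row $(e_1, \dots, e_{p-1}, 0)$. Hence $\psi_2(V)$ is exactly the row space of $C = C(\bold{e},0)$, and so $\dim V = \operatorname{rank} C = t$. This is part (i).

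For part (ii), note first that $|G_2 : V| = |G_1| = p$, so $|G_2| = p^{t+1}$; it remains to prove that $G_2$ has nilpotency class $t$. Conjugation by $a$ makes $V$ a module over the group algebra $R = \F_p[\langle a\rangle] \cong \F_p[x]/(x^p-1)$. Since $\operatorname{char}\F_p = p$ we have $x^p - 1 = (x-1)^p$, so $R$ is a uniserial local ring whose ideals are precisely the powers of $\mathfrak{m} = (x-1)$, with $\dim_{\F_p} R/\mathfrak{m}^k = k$. Because $V$ is generated by the single element $b_0$ as an $R$-module (the remaining generators $b_i$ being $a$-conjugates of $b_0$) and $\dim_{\F_p} V = t$, the annihilator of $b_0$ must be the ideal of codimension $t$, whence $V \cong R/\mathfrak{m}^t$. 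Consequently, if $\sigma$ denotes the endomorphism of $V$ induced by conjugation by $a$ and $\omega = \sigma - 1$ (the image of $x-1$ under the action), then $\omega$ acts on $V$ as a single nilpotent Jordan block: $\omega^i V \ne 0$ for $i < t$ while $\omega^t V = 0$.

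It then remains to read off the lower central series. Writing $V$ additively we have $[v,a] = \omega v$ for all $v \in V$; since $\omega V$ is $a$-invariant and $G_2/\omega V$ is abelian (conjugation by $a$ being trivial on $V/\omega V$), an easy induction gives $\gamma_{i+1}(G_2) = \omega^i V$ for every $i \ge 1$. With the previous paragraph this yields $\gamma_t(G_2) \ne 1 = \gamma_{t+1}(G_2)$, so $G_2$ is a $p$-group of order $p^{t+1}$ and class $t$, i.e.\ of maximal class. (Here $t \ge 2$ always, for otherwise $(x-1)^{p-1}$ would divide the polynomial $e_1 + e_2 x + \dots + e_{p-1}x^{p-2}$, forcing $\bold{e} = 0$; in particular $|G_2| \ge p^3$, so the term ``maximal class'' is unambiguous.)

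The commutator bookkeeping of the last paragraph and the elementary ideal theory of $R$ are routine. The one point that genuinely must be noticed is that $V$ is \emph{cyclic} as a module over the group algebra of $\langle a\rangle$ — this is exactly what forces the single Jordan block, and hence the nilpotency class, to attain its largest possible value $t$. I do not foresee any real obstacle beyond this observation.
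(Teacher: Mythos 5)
The paper gives no proof of this proposition---it is imported verbatim from \cite[Theorem 2.4]{FZ}---so there is no internal argument to compare against; your proposal is a correct, self-contained derivation. Part (i) is exactly the standard computation: $\psi_2$ embeds $\st_{G_2}(1)$ into $G_1\times\overset{p}{\ldots}\times G_1\cong\F_p^p$, and reducing the coordinates of $\psi(b_i)$ modulo $\st_G(1)$ identifies the image with the row space of the circulant matrix, giving dimension $t$. For part (ii), your packaging of the key step as ``$\st_{G_2}(1)$ is a \emph{cyclic} module over the uniserial ring $\F_p[x]/(x-1)^p$, hence isomorphic to $R/\mathfrak{m}^t$'' is a clean and correct way to force the single Jordan block, and the bookkeeping $\gamma_{i+1}(G_2)=\omega^iV$ (using that $V$ is abelian and $G_2=\langle a\rangle\ltimes V$) correctly pins the class at $t$; this is in substance the same mechanism as in \cite{FZ}, where one works inside $C_p\wr C_p$ and uses that $a$ acts on the base group as a single nilpotent Jordan block. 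Your parenthetical check that $t\geq 2$, via Lemma \ref{rank of C}(i) and the nonvanishing of the defining vector, is a worthwhile point (it is the same fact the paper invokes later in the proof of Lemma \ref{conjugates of b}) and removes any ambiguity in the phrase ``maximal class''. I see no gaps.
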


\begin{lem}\cite[Lemma 2.7]{FZ} 
\label{rank of C}
Let $C=C(e_1, \dots, e_{p-1}, 0)$  be a circulant matrix over $\F_p$. Then 
\begin{enumerate}
\item 
The rank of $C$ is $p-m$, where $m$ is the multiplicity of $1$ as a root of the polynomial 
$f(X)=e_1+e_2X+\dots + e_{p-1}X^{p-2}$.
\item 
The rank of $C$ is strictly less than $p$ if and only if $\Sigma_{i=1}^{p-1}e_i=0$.
\end{enumerate}
\end{lem}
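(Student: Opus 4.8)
The plan is to pass from the circulant matrix $C = C(e_1,\dots,e_{p-1},0)$ to the polynomial $f(X) = e_1 + e_2 X + \dots + e_{p-1} X^{p-2}$ over $\F_p$ via the standard identification of circulants with the quotient ring $\F_p[X]/(X^p-1)$. Concretely, I would let $g(X) = e_1 + e_2 X + \dots + e_{p-1}X^{p-2} + 0\cdot X^{p-1} = f(X)$ be the polynomial whose coefficient vector is the first row of $C$, and recall that $C$ acts on $\F_p^p \cong \F_p[X]/(X^p-1)$ as multiplication by $g(X)$. Hence $\operatorname{rank} C = p - \dim_{\F_p}\ker C = p - \deg\gcd\bigl(g(X),\, X^p - 1\bigr)$, using that the kernel of "multiplication by $g$" in $\F_p[X]/(X^p-1)$ is the ideal generated by $(X^p-1)/\gcd(g, X^p-1)$, which has dimension $\deg\gcd(g, X^p-1)$.

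For part (i), the key point is that over $\F_p$ we have the factorization $X^p - 1 = (X-1)^p$, since $\F_p$ has characteristic $p$. Therefore $\gcd\bigl(f(X), (X-1)^p\bigr) = (X-1)^m$, where $m$ is the multiplicity of $X=1$ as a root of $f$ (and $m \le p-2 < p$ automatically since $\deg f \le p-2$, so the gcd is exactly $(X-1)^m$ rather than being cut off by the exponent $p$). Combining with the rank formula gives $\operatorname{rank} C = p - m$, which is part (i).

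For part (ii), I would simply observe that $\operatorname{rank} C < p$ if and only if $m \ge 1$, i.e.\ if and only if $1$ is a root of $f(X)$, i.e.\ if and only if $f(1) = e_1 + e_2 + \dots + e_{p-1} = \Sigma_{i=1}^{p-1} e_i = 0$ in $\F_p$. This is immediate from part (i).

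The only mildly delicate step is justifying the rank formula $\operatorname{rank} C = p - \deg\gcd(g, X^p-1)$ cleanly; everything else is formal. This is a classical fact about circulant matrices, but since the ring $\F_p[X]/(X^p-1)$ is not semisimple in characteristic $p$ one cannot diagonalize $C$, so I would prove it directly by the ideal-dimension argument above rather than by eigenvalue considerations. I expect this to be the main (very minor) obstacle; the characteristic-$p$ factorization $X^p-1 = (X-1)^p$ then does all the real work.
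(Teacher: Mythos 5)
Your proof is correct, and it is essentially the canonical argument: identify $C$ with multiplication by $f(X)$ on $\F_p[X]/(X^p-1)$, use $X^p-1=(X-1)^p$ in characteristic $p$, and read off the rank as $p-\deg\gcd(f,(X-1)^p)=p-m$. Note that the paper does not prove this lemma at all but quotes it from \cite[Lemma 2.7]{FZ}, where the proof runs along exactly these lines, so there is nothing to reconcile.
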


\begin{pro}\cite[Theorems 2.13, 2.14]{FZ} 
\label{comm of st_G(1)}
Let $G$ be a GGS-group. Then 
\begin{enumerate}
\item 
$\st_{G}(1)^{'} \leq \st_G(2)$.
\item 
$|G: \st_G(1)^{'}|=p^{p+1}$.
\end{enumerate}
\end{pro}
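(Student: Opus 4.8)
The plan is to prove both parts by passing to the coordinate images under $\psi$ and exploiting that, by the displayed formulas for the $\psi(b_i)$, every first‑level section of each $b_i$ is either a power of $a$ or equals $b$. For part~(i): since $\st_G(1)=\langle b_0,\dots,b_{p-1}\rangle$ by Proposition~\ref{description of G} and $\st_G(2)$ is normal in $G$, it suffices to show that $\st_G(1)/\st_G(2)$ is abelian, i.e.\ that $[b_i,b_j]\in\st_G(2)$ for all $i,j$. I would compute $\psi([b_i,b_j])=[\psi(b_i),\psi(b_j)]$ coordinatewise. For $i\neq j$ the coordinate of $b_i$ carrying $b$ and the coordinate of $b_j$ carrying $b$ are distinct, so every coordinate of $[\psi(b_i),\psi(b_j)]$ is of the form $[a^r,a^s]=1$, or $[b,a^s]=b^{-1}b_s$, or $[a^s,b]=b_s^{-1}b$; in each case it lies in $\st(1)$. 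Since an element of $\st_G(1)$ lies in $\st_G(2)$ exactly when all of its first‑level sections lie in $\st(1)$, this gives $[b_i,b_j]\in\st_G(2)$, hence $\st_G(1)'\leq\st_G(2)$.

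For part~(ii), since $G=\langle a\rangle\ltimes\st_G(1)$ with $\langle a\rangle$ of order $p$, we have $|G:\st_G(1)'|=p\,|\st_G(1):\st_G(1)'|$, so it is enough to show $|\st_G(1):\st_G(1)'|=p^{p}$. The inequality $\leq p^{p}$ is immediate, because $\st_G(1)$ is generated by the $p$ elements $b_0,\dots,b_{p-1}$, each of order $p$. For the reverse inequality I would construct a surjection $\st_G(1)\twoheadrightarrow\F_p^{\,p}$ killing $\st_G(1)'$. By Proposition~\ref{description of G}, $|G:G'|=p^2$, so the images of $a$ and $b$ form a basis of $G^{\mathrm{ab}}\cong\F_p^2$ and there is a homomorphism $\lambda\colon G\to\F_p$ with $\lambda(a)=0$ and $\lambda(b)=1$. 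The restriction of $\psi$ to $\st_G(1)$ takes values in $G\times\overset{p}{\ldots}\times G$ (its generators $b_i$ have all coordinates in $\langle a,b\rangle$), and composing with $\lambda\times\overset{p}{\ldots}\times\lambda$ produces a homomorphism $\st_G(1)\to\F_p^{\,p}$ sending each $b_i$ to the standard basis vector indexed by the coordinate in which $b$ occurs in $\psi(b_i)$. As $i$ runs over $0,\dots,p-1$ those coordinates run over all $p$ positions, so the map is onto $\F_p^{\,p}$; being valued in an abelian group it factors through $\st_G(1)^{\mathrm{ab}}$, which therefore has order at least $p^{p}$. Combining the two bounds gives $|G:\st_G(1)'|=p^{p+1}$.

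The coordinatewise computations are routine; the one step needing an idea is the lower bound in part~(ii), where the point is that applying a single homomorphism $\lambda$ separating $b$ from $a$ in each coordinate exhibits the $b_i$ as the standard basis of $\F_p^{\,p}$, precisely because the $b$'s sit in pairwise distinct coordinates of the $\psi(b_i)$. Apart from that, the argument only uses $\psi$ together with the already‑established facts $\st_G(2)\leq G'\leq\st_G(1)$ and $|G:G'|=p^2$.
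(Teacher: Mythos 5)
Your proof is correct. Note that the paper does not prove this proposition at all: it is imported verbatim from Fern\'andez-Alcober and Zugadi-Reizabal \cite{FZ}, so there is no internal argument to compare yours against. Your verification is sound on its own terms: for (i), the observation that for $i\neq j$ the letter $b$ occupies distinct coordinates of $\psi(b_i)$ and $\psi(b_j)$, so every coordinate of $[\psi(b_i),\psi(b_j)]$ is $1$, $b^{-1}b_s$ or $b_r^{-1}b$ and hence lies in $\st(1)$, correctly yields $[b_i,b_j]\in\st(2)\cap G=\st_G(2)$ and thus that $\st_G(1)/\st_G(2)$ is abelian; for (ii), the upper bound $p^{p}$ from the $p$ generators of order $p$ combined with the surjection $(\lambda\times\overset{p}{\ldots}\times\lambda)\circ\psi\colon\st_G(1)\twoheadrightarrow\F_p^{\,p}$ (which sends the $b_i$ to the standard basis precisely because the $b$'s sit in pairwise distinct coordinates) pins down $|\st_G(1):\st_G(1)'|=p^{p}$, and the semidirect decomposition $G=\langle a\rangle\ltimes\st_G(1)$ gives the extra factor of $p$. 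This is essentially the standard argument one finds in \cite{FZ}, and all the facts you invoke ($\st_G(1)=\langle b_0,\dots,b_{p-1}\rangle$, $|G:G'|=p^2$, the membership criterion for $\st(2)$) are available at this point in the paper.
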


We say that the defining vector $\bold{e}=(e_1, \dots, e_{p-1})$ of a GGS-group is \emph{symmetric} if $e_i=e_{p-i}$ for all $i=1,\dots, p-1$.

\begin{pro}\cite[Lemma 3.4, Theorem 3.7]{FZ}
\label{non-symmetric}
Let $G$ be a GGS-group with non-symmetric defining vector $\bold{e}$. Then 
\begin{enumerate}
\item 
$\psi(st_G(1)')=G' \times  \overset{p}{\ldots} \times G'$.
\item 
 If $G_n=G/\st_G(n)$ then for every $n\geq 2$
 \[
 \log_p |G_n|=tp^{n-2}+1,
 \]
 where $t$ is the rank of $C(\bold{e},0)$.
\end{enumerate}
\end{pro}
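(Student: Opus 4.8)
\emph{Part (i).} The inclusion $\psi(\st_G(1)')\subseteq G'\times\overset{p}{\ldots}\times G'$ is immediate from the description of the $b_i$: all components of each $\psi(b_i)$ lie in $\langle a,b\rangle=G$, so $\psi(\st_G(1))\leq G\times\overset{p}{\ldots}\times G$, and hence every component of $\psi(w)$ is a product of commutators in $G$ for $w\in\st_G(1)'$. For the reverse inclusion, note that $\st_G(1)'\trianglelefteq G$ is invariant under conjugation by $a$, and that $\psi(g^a)$ is obtained from $\psi(g)$ by cyclically permuting the $p$ coordinates for $g\in\st(1)$; thus $\psi(\st_G(1)')$ is invariant under cyclic shifts of coordinates, and it is enough to prove that $G'\times 1\times\cdots\times 1$ (with $p-1$ trivial factors) is contained in $\psi(\st_G(1)')$.

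Using the formulas for the $b_i$ one computes, for $1\le j\le p-1$,
\[
\psi([b_0,b_j])=\bigl([a^{e_j},b],\,1,\dots,1,\,[b,a^{e_{p-j}}]\bigr),
\]
with non-trivial entries in positions $j$ and $p$, and $\psi([b_i,b_{i+j}])$ is the cyclic shift of this tuple by $i$. Reduce each coordinate modulo $\gamma_3(G)$ and identify $(G'/\gamma_3(G))\times\overset{p}{\ldots}\times(G'/\gamma_3(G))$ with $\F_p[x]/(x^p-1)$, sending coordinate $k$ to the coefficient of $x^k$ and the cyclic shift to multiplication by $x$; since $\st_G(1)''$ maps into $\gamma_3(G)\times\overset{p}{\ldots}\times\gamma_3(G)$ and the $[b_i,b_j]$ generate $\st_G(1)'$ modulo $\st_G(1)''$, the image of $\psi(\st_G(1)')$ is exactly the ideal generated by the polynomials $e_jx^j-e_{p-j}$, $1\le j\le p-1$. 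Now $\F_p[x]/(x^p-1)=\F_p[x]/((x-1)^p)$ is a local ring with maximal ideal $(x-1)$, so this ideal is the whole ring iff some $e_jx^j-e_{p-j}$ is a unit, i.e. does not vanish at $x=1$, i.e. $e_j\ne e_{p-j}$ for some $j$ — precisely the hypothesis that $\bold{e}$ is non-symmetric. Hence $\psi(\st_G(1)')$ surjects onto $(G'/\gamma_3(G))\times\overset{p}{\ldots}\times(G'/\gamma_3(G))$.

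It remains to upgrade this to an equality, equivalently to prove $G'\times\overset{p}{\ldots}\times G'\subseteq\psi(\st_G(1))$; this is the crux. Put $L=\psi(\st_G(1)')$, so $L\subseteq G'\times\overset{p}{\ldots}\times G'$ and $L\cdot(\gamma_3(G)\times\overset{p}{\ldots}\times\gamma_3(G))=G'\times\overset{p}{\ldots}\times G'$. Using $G'=[\st_G(1),G]$ (which follows from Proposition \ref{description of G}(i)) and forming commutators $[w,b_\ell]\in\st_G(1)'$ with suitable $w\in\st_G(1)'$, I would show inductively that $L\cdot(\gamma_k(G)\times\overset{p}{\ldots}\times\gamma_k(G))=G'\times\overset{p}{\ldots}\times G'$ for all $k\ge 3$. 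Since $|G:\st_G(1)'|=p^{p+1}$ is finite by Proposition \ref{comm of st_G(1)}(ii), $\st_G(1)'$ is open, hence closed, so $L$ is closed; passing to the limit — or, concretely, carrying the whole argument out in the finite quotients $G_n$ and using $\bigcap_n\st_G(n)=1$ — then forces $L=G'\times\overset{p}{\ldots}\times G'$. I expect the real difficulty to be making this induction close: keeping all the commutator corrections inside $\st_G(1)'$ and making the approximation uniform across the $p$ coordinates. This is the step that genuinely exploits the self-similar structure of the GGS-group, and where the non-symmetry hypothesis is used (via the previous paragraph) to start the induction.

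\emph{Part (ii).} Induct on $n$, the case $n=2$ being Proposition \ref{properties of G2}(ii). Let $n\ge 3$. Since $G=\langle a\rangle\ltimes\st_G(1)$ we have $|G_n|=p\,|\st_{G_n}(1)|$. The map $\psi_n$ is injective on $\st_{G_n}(1)$, and since $\psi_n$ and $\st_{G_n}(1)'$ are the images of $\psi$ and $\st_G(1)'$, part (i) gives $\psi_n(\st_{G_n}(1)')=G_{n-1}'\times\overset{p}{\ldots}\times G_{n-1}'$, whence $|\st_{G_n}(1)'|=|G_{n-1}'|^p$. Moreover $\st_G(n)\subseteq\st_G(1)'$ for $n\ge 3$ (again from part (i), since $\psi(\st_G(3))\subseteq\st_G(2)\times\overset{p}{\ldots}\times\st_G(2)\subseteq\psi(\st_G(1)')$), and $\st_G(n)\subseteq G'$ for $n\ge 2$; hence Propositions \ref{description of G}(iii) and \ref{comm of st_G(1)}(ii) yield $|G_{n-1}:G_{n-1}'|=p^2$ and $|\st_{G_n}(1):\st_{G_n}(1)'|=p^p$. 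Therefore
\[
|G_n|=p\cdot p^p\cdot|G_{n-1}'|^p=p^{p+1}\bigl(|G_{n-1}|/p^2\bigr)^p=|G_{n-1}|^p\,p^{1-p},
\]
so $\log_p|G_n|=p\log_p|G_{n-1}|-(p-1)$. Writing $x_n=\log_p|G_n|$, this reads $x_n-1=p(x_{n-1}-1)$ with $x_2-1=t$, hence $x_n-1=tp^{n-2}$, i.e. $\log_p|G_n|=tp^{n-2}+1$. So the only real obstacle is part (i); part (ii) is then this piece of bookkeeping.
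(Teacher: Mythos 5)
First, a remark on the comparison itself: the paper does not prove this proposition — it is imported verbatim from \cite{FZ} (Lemma 3.4 and Theorem 3.7) — so there is no in-paper proof to measure you against, and I am judging your argument on its own merits.

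Your part (ii) is correct and complete modulo part (i). The recursion $\log_p|G_n|-1=p\,(\log_p|G_{n-1}|-1)$, obtained from $|G_n|=p\cdot|\st_{G_n}(1):\st_{G_n}(1)'|\cdot|\st_{G_n}(1)'|$ together with $|\st_{G_n}(1)'|=|G_{n-1}'|^p$ and the base case from Proposition \ref{properties of G2}, is exactly the standard bookkeeping, and your verifications that $\st_G(n)\le G'$ for $n\ge 2$ and $\st_G(n)\le\st_G(1)'$ for $n\ge 3$ are right.

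Part (i), however, contains a genuine gap at precisely the step you flag yourself. The computation of $\psi([b_0,b_j])$ and its translation into the local ring $\F_p[x]/((x-1)^p)$ is correct, correctly locates where non-symmetry is used, and proves $\psi(\st_G(1)')\cdot\bigl(\gamma_3(G)\times\overset{p}{\ldots}\times\gamma_3(G)\bigr)=G'\times\overset{p}{\ldots}\times G'$. But passing from this first-order statement to actual equality is the entire content of the lemma, and you do not carry it out: the induction ``$L\cdot\gamma_k(G)^{\times p}=G'^{\times p}$ for all $k$'' is only announced, and the concluding limit argument is not well-founded as stated. That $|G:\st_G(1)'|$ is finite makes $\st_G(1)'$ closed in $G$, but what the limit argument needs is that $L=\psi(\st_G(1)')$ is closed in $G'\times\overset{p}{\ldots}\times G'$ for the topology defined by the subgroups $\gamma_k(G)^{\times p}$, i.e.\ that $\gamma_k(G)\times\overset{p}{\ldots}\times\gamma_k(G)\le L$ for some $k$. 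Containing a full direct product of finite-index subgroups of $G$ is essentially the branch property you are trying to establish, and it does not follow from $|G:\st_G(1)'|<\infty$: a priori $L$ could be a subdirect but infinite-index subgroup of $G'^{\times p}$, in which case $L\cdot\gamma_k(G)^{\times p}=G'^{\times p}$ for every $k$ would carry no information. The source \cite{FZ} closes this step not by a limiting argument but by a finite, explicit commutator descent inside $\st_G(1)'$ that produces $G'\times 1\times\cdots\times 1$ on the nose (using non-symmetry to obtain an element with $\psi$-image $([a,b]z,1,\dots,1)$, $z\in\gamma_3(G)$, and then absorbing $\gamma_3(G)$ by further commutation). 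Some such explicit descent is what is missing from your argument; everything else is in order.
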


\section{Quotients of periodic GGS-groups}

Let $G$ be a periodic GGS-group with defining vector $\bold{e}=(e_1, \dots, e_{p-1})$, where $p$ is an odd prime. In this section, we determine whether the quotients of $G$ by its level stabilizers $\st_G(n)$ are Beauville groups.

We first deal with the quotient group $G_2=G/\st_G(2)$.

\begin{thm}
\label{G_2 periodic}
Let $G$ be a periodic GGS-group. Then the quotient $G/\st_G(2)$ is a Beauville group if and only if $p\geq 5$.
\end{thm}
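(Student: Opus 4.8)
The plan is to analyze $G_2 = G/\st_G(2)$ very explicitly using Proposition~\ref{properties of G2}, which tells us that $G_2$ is a $p$-group of maximal class of order $p^{t+1}$, where $t$ is the rank of the circulant matrix $C = C(\mathbf{e},0)$. Since $G$ is periodic, Lemma~\ref{rank of C}(ii) gives $t \leq p-1$, so $|G_2| \leq p^p$. The key structural input is that a $p$-group of maximal class is a $2$-generator group in which $G_2' = \gamma_2(G_2)$ has index $p^2$ and every element lies either in a ``uniform'' two-step centralizer or behaves predictably modulo $\gamma_2(G_2)$. For the Beauville property we need two generating pairs $\{x_1,y_1\}$, $\{x_2,y_2\}$ whose associated sets $\Sigma(x_i,y_i)$ intersect trivially. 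A standard and convenient sufficient condition (used throughout the Beauville literature, e.g.\ in \cite{FG}) is: $G_2/\Phi(G_2) \cong C_p \times C_p$, and one can choose the two pairs so that the six cyclic subgroups $\langle x_1\rangle, \langle y_1\rangle, \langle x_1 y_1\rangle, \langle x_2\rangle, \langle y_2\rangle, \langle x_2 y_2\rangle$ project to six \emph{distinct} subgroups of order $p$ in $G_2/\Phi(G_2)$ (there are $p+1$ such subgroups, so we need $p+1 \geq 6$, i.e. $p \geq 5$); then any two of the $\Sigma$-sets meet inside $\Phi(G_2)$, and a further argument pins the intersection down to the identity.

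First I would record the Frattini quotient: by Proposition~\ref{description of G}(iii), $|G:G'| = p^2$, and since $\st_G(2) \leq G'$ we get $G_2/G_2' \cong C_p \times C_p$ with $G_2' = \Phi(G_2)$; so $G_2$ is indeed $2$-generated with $p+1$ maximal subgroups. Next, because $G_2$ has maximal class, every element outside $G_2'$ has a controlled power structure; the crucial point to verify is that for a suitable choice of generators $a, b$ (the standard ones), all of $a$, $b$, $ab$, and other short words like $a b^{\lambda}$ have the property that $\langle a^{g}\rangle \cap G_2'$, etc., are small — ideally, that $\Sigma(x_i,y_i) \cap \Sigma(x_j,y_j) = 1$ once the projections to the Frattini quotient are distinct. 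For maximal class $p$-groups this is essentially automatic: if $x \notin G_2'$ then $\langle x \rangle$ meets $G_2'$ only in $\langle x \rangle \cap Z(G_2)$ at worst, and by conjugating and intersecting across two pairs lying in different maximal subgroups one lands in the intersection of distinct maximal subgroups, which is forced into a small central piece; the periodicity is not even needed for the ``$p \geq 5$ implies Beauville'' direction beyond bounding $t$. I would then take, concretely, $\{x_1,y_1\} = \{a, b\}$ and $\{x_2,y_2\} = \{ab, a b^{-1}\}$ (or $\{a b^c, a b^d\}$ for suitable $c,d$), check these are generating pairs (each pair generates modulo $\Phi(G_2)$, hence generates), and check that the six resulting lines in the two-dimensional $\F_p$-space $G_2/\Phi(G_2)$ are pairwise distinct when $p \geq 5$.

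For the converse, when $p = 3$ we have $|G_2/\Phi(G_2)| = 9$ with only $4$ subgroups of order $3$, so \emph{any} two generating pairs force some $\langle \cdot \rangle$ from the first pair and some $\langle \cdot \rangle$ from the second to lie in the \emph{same} maximal subgroup $M$; I would then argue that in a maximal class $3$-group the intersection $\Sigma(x_1,y_1) \cap \Sigma(x_2,y_2)$ contains a nontrivial element — concretely, $M$ being of maximal class and order $3^{t}$ with $t \leq 2$, one checks $M$ is cyclic or small enough that the union of conjugates of the relevant cyclic subgroup covers a nontrivial element common to both $\Sigma$-sets. (Here it matters that $t \leq p-1 = 2$, i.e. periodicity, to keep $G_2$ small; indeed $|G_2| \leq 3^3$, and one can just inspect the groups of maximal class of order $27$ and $9$ directly.)

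\textbf{Main obstacle.} The delicate part is the \emph{only if} direction for $p = 3$: one must rule out \emph{every} pair of generating pairs, and in a maximal class $3$-group the power/conjugacy structure is less rigid than a naive Frattini-quotient count suggests, so the argument that the two $\Sigma$-sets are forced to share a nontrivial element needs the explicit small order $|G_2| \in \{9, 27\}$ coming from periodicity plus a short case analysis of which maximal subgroup the shared line lies in and how conjugation spreads it. For the \emph{if} direction, the only real work is confirming that distinctness of the six Frattini-lines genuinely yields $\Sigma(x_1,y_1) \cap \Sigma(x_2,y_2) = 1$ rather than merely $\subseteq \Phi(G_2)$; this requires the observation that in a maximal class $p$-group, a cyclic subgroup generated by an element outside $\Phi(G_2)$ meets $\Phi(G_2)$ only in its (at most one-dimensional) intersection with the center, together with the fact that two distinct maximal subgroups intersect in $\gamma_3(G_2)$, which has small exponent here — so the pieces one could get in common are squeezed to $1$.
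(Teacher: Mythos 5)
Your overall framework (distinct lines in the Frattini quotient, with $p+1\geq 6$ forcing $p\geq 5$) is in the right spirit, but there is a genuine gap in your ``if'' direction: you never establish that $\exp G_2=p$, and the substitute you offer --- that in a maximal class group a cyclic subgroup generated by an element outside $\Phi(G_2)$ meets $\Phi(G_2)$ ``only in its intersection with the center,'' so that distinctness of the six Frattini lines is ``essentially automatic[ally]'' enough --- is false as a general principle for maximal class groups. The group $C_p\wr C_p$ is a maximal class group of order $p^{p+1}$ in which every element of order $p^2$ has its $p$th power generating the (order $p$) center; hence any two such cyclic subgroups intersect nontrivially \emph{even when they project to different lines of the Frattini quotient}. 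This is exactly why the quotients $G/\st_G(2)$ of non-periodic GGS-groups fail to be Beauville (Section 4 of the paper), and your claim that ``periodicity is not even needed for the `$p\geq 5$ implies Beauville' direction beyond bounding $t$'' would, if correct, prove $C_p\wr C_p$ Beauville for $p\geq 5$, contradicting Theorem B. The bound $t\leq p-1$ is needed precisely to get $|G_2|\leq p^p$, hence that $G_2$ is a \emph{regular} $p$-group; since $G_2$ is generated by two elements of order $p$, regularity then forces $\exp G_2=p$. Once the exponent is $p$, every cyclic subgroup generated by an element outside $\Phi(G_2)$ meets $\Phi(G_2)$ trivially, and your Frattini-line count does the rest. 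This is the route the paper takes, quoting the regularity of maximal class groups of order at most $p^p$ and then the Catanese-type criterion of \cite{FG} for $2$-generated groups of exponent $p$ (which also disposes of the $p=3$ direction in one stroke). Your missing step is not cosmetic: without exponent $p$ (or some equally strong control on $p$th powers), the squeeze ``intersection lies in $\Phi(G_2)$, hence in the center, hence is trivial'' simply does not close.

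For the $p=3$ converse your counting idea (four lines cannot accommodate six distinct ones) is correct, but the step from ``two of the six elements lie in the same maximal subgroup'' to ``the two $\Sigma$-sets share a nontrivial element'' again needs the exponent-$p$/regularity input: here $|G_2|=27$ and $G_2$ is extraspecial of exponent $3$, where two elements congruent modulo $\Phi(G_2)$ up to a power are actually conjugate up to a power, so the conclusion follows; your proposal gestures at a case analysis without identifying this mechanism.
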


\begin{proof}
By Proposition \ref{properties of G2}(ii), we know that $G_2$ is a group of maximal class of order $p^{t+1}$, where $t$ is the rank of $C(\bold{e}, 0)$. Then Lemma \ref{rank of C}(ii) implies that $|G_2|\leq p^p$, and hence $G_2$ is regular by Lemma 3.13 in \cite{suz2}.  Since $G_2$ is generated by two elements of order $p$, this, together with being regular, implies that $\exp G_2=p$, by Corollary 2.11 in \cite{fer}. Then according to Corollary 2.10 in \cite{FG}, $G_2$ is a Beauville group if and only if $p\geq 5$.
\end{proof}

We next deal with the quotient $G_3=G/\st_G(3)$. To this purpose, first we need to calculate the orders of $a^{-1}b$ and $ab^i$ for $1\leq i \leq p-1$.

\begin{lem}
\label{orders of ab^i}
Let $G$ be a periodic GGS-group. Then the orders of  $a^{-1}b$ and $ab^i$ for $1\leq i \leq p-1$ are $p^2$ in both $G$ and $G_3$.
\end{lem}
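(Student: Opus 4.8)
The plan is to work inside the finite quotient $G_3=G/\st_G(3)$ and compute $p$-th powers of the relevant elements explicitly using the wreath recursion $\psi_3$. Recall that since $G$ is periodic we have $\sum_{i=1}^{p-1}e_i=0$ in $\F_p$, so each $a^{e_j}$ is a nontrivial rooted automorphism of order $p$ unless $e_j=0$. First I would record that $a^{-1}b$ and $ab^i$ all lie outside $\st_G(1)$ (their ``root'' part is $a^{\pm1}$ or $a$, which is nontrivial), hence they have order divisible by $p$; so it suffices to show their $p$-th powers are nontrivial in $G_3$ but trivial in $G$-modulo-$\st_G(3)$-after-one-more-power, i.e. that the $p$-th power lies in $\st_G(2)\setminus\st_G(3)$ when computed in $G_3$, while in $G$ itself the order is exactly $p^2$.

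The key computational step: for an element $g=a^{k}b^{i}$ with $k\not\equiv0$, the $p$-th power $g^p$ lies in $\st_G(1)$, and one computes $\psi(g^p)$ by the standard ``portrait'' formula — it is a product of conjugates of the first-level sections cyclically permuted by $a^k$. Concretely, writing $\psi(b_j)$ as in the displayed equations $(2.1)$, one gets that $\psi((a^{-1}b)^p)$ and $\psi((ab^i)^p)$ are tuples each of whose $p$ coordinates is a fixed word in $b,b_1,\dots,b_{p-1}$ and the $a^{e_j}$'s; because $\sum e_j=0$ the "$a$-part'' collapses and what survives is an element of $\st_G(1)$ whose sections at the second level must then be examined. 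I would show that these coordinates are, up to conjugacy and reordering, equal to $b^{\,s}$ for some $s$ together with a bounded product of the $b_j^{\pm1}$, and crucially that modulo $\st_G(2)$ this is nontrivial — this is where one uses that $G_2$ is of maximal class (Proposition~\ref{properties of G2}) together with Lemma~\ref{rank of C} and the fact that $b\notin\st_G(2)$, so a nonzero multiple of $b$ survives in $\st_{G_2}(1)$. That gives $g^p\notin\st_G(2)\supseteq\st_G(3)$... wait: one needs $g^p\in\st_G(2)\setminus\st_G(3)$; so the right claim is that $g^p\in\st_G(1)$, its image under $\psi$ lies in $\st_G(1)\times\cdots\times\st_G(1)$ (equivalently $g^p\in\st_G(2)$) by a parity/sum computation, and then $\psi(g^p)$ has some coordinate equal (mod $\st_G(1)$) to a nonzero power of $b$, hence $g^p\notin\st_G(3)$. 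Thus $g^p$ has order exactly $p$ in $G_3$, giving order $p^2$ for $g$.

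For the statement in $G$ itself (not just $G_3$), I would then invoke periodicity directly: $a^{-1}b$ and $ab^i$ are elements of the periodic group $G$, so they have finite $p$-power order; the computation above already shows the order is at least $p^2$ (it is $\geq p^2$ even in the quotient $G_3$), and to get the upper bound $\le p^2$ I would push the same portrait computation one level deeper, showing $\psi^{(2)}(g^{p^2})$ has all coordinates trivial — equivalently $g^{p^2}\in\st_G(3)$ forces $g^{p^2}=1$ by a length/contraction argument, or alternatively cite that a periodic GGS-group over the $p$-adic tree has elements of order dividing $p^2$ when $p$ is... actually that last shortcut is false in general, so I would do the explicit two-step recursion: the sections of $g^{p^2}$ at level $2$ are themselves $p$-th powers of conjugates of elements of the same shape $a^{k'}b^{i'}$, and by induction on the tree level (using that $\psi(\st_G(k))\subseteq\st_G(k-1)\times\cdots\times\st_G(k-1)$, Remark~\ref{image of st_G(k)}) one shows $g^{p^2}$ fixes every level, hence is trivial.

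The main obstacle I anticipate is the bookkeeping in the portrait formula for $(ab^i)^p$: conjugating $b^i$ by successive powers of $a$ cycles through $b_0^i,b_1^i,\dots$, and multiplying out the resulting length-$p$ product of tuples requires carefully tracking which coordinate receives which $a^{e_j}$ and which copy of $b$, then checking that after using $\sum e_j=0$ the ``$a$-part'' in every coordinate vanishes (so $g^p\in\st_G(2)$) while the ``$b$-part'' does not vanish modulo $\st_G(3)$. Isolating a single coordinate and reducing it modulo $\st_{G_2}(1)$ — where by Proposition~\ref{properties of G2} and Lemma~\ref{rank of C} the image of $b$ is nonzero — should make the nontriviality transparent; the symmetric-versus-nonsymmetric distinction of the defining vector does not seem to matter here since we only need a lower bound from one surviving coordinate.
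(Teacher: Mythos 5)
Your proposal follows essentially the same route as the paper: compute $(ab^i)^p = b_{p-1}^i\cdots b_1^i b_0^i$ explicitly via the wreath recursion and use $\sum_{j=1}^{p-1} e_j = 0$ to collapse the rooted parts, which yields $\psi((ab^i)^p) = \big((b^i)^{a^{ie_1}}, (b^i)^{a^{i(e_1+e_2)}}, \dots, b^i, b^i\big)$; since each coordinate is a \emph{single} conjugate of $b^i$, of order exactly $p$ and outside $\st_G(2)$, both the lower bound (order $p^2$ already in $G_3$) and the upper bound ($g^{p^2}=1$) fall out at once, so your anticipated detours through the maximal-class structure of $G_2$ and a contraction argument are unnecessary. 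The case of $a^{-1}b$ is handled in the paper by the identity $(a^{-1}b)^{-1} = (ab^{p-1})^b$, reducing it to the $ab^i$ computation.
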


\begin{proof}
Let $1\leq i \leq p-1$. Then we have
\[
(ab^i)^p= a^p(b^i)^{a^{p-1}}\dots (b^i)^ab^i= b_{p-1}^i\dots b_1^i b_0^i,
\]
and hence 
\[
\psi((ab^i)^p)=\big( a^{i(e_2+\dots + e_{p-1})}b^i a^{ie_1}, a^{i(e_3+\dots + e_{p-1})}b^i a^{i(e_1+e_2)}, \dots, a^{i(e_1+\dots + e_{p-1})}b^i  \big).
\]
Then being $\Sigma_{i=1}^{p-1} e_i=0$ implies that
\begin{equation}
\label{pth power of ab^i}
\psi((ab^i)^p)=\big( (b^i)^{a^{ie_1}}, (b^i)^{a^{i(e_1+e_2)}}, \dots, b^i, b^i  \big).
\end{equation}
Since each component in equation (\ref{pth power of ab^i}) is of order $p$, the order of $ab^i$ is $p^2$ for every $1\leq i \leq p-1$. Observe that $(ab^i)^p \notin \st_G(3)$, and hence also in $G_3$ the order of $ab^i$ is $p^2$.

On the other hand, since  $(a^{-1}b)^{-1}=(ab^{-1})^b=(ab^{p-1})^b$, the order of $a^{-1}b$ is also $p^2$ in  both $G$ and $G_3$.
\end{proof}

The next lemma shows the relation between the conjugates of $b$ by powers of $a$ modulo $\st_G(2)$.

\begin{lem}
\label{conjugates of b}
Let $G$ be a GGS-group. If 
\[
b^{a^i}\equiv b^{a^j} \pmod{\st_G(2)}
\]
for $0\leq i, j \leq p-1$, then $i=j$.
\end{lem}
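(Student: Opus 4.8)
The plan is to linearise the statement. By Proposition~\ref{comm of st_G(1)}(i) the quotient $V:=\st_G(1)/\st_G(2)$ is elementary abelian, and I would embed it into $(\Z/p\Z)^p$ as follows. From the formulas for $\psi(b_i)$, the map $\psi$ sends $\st_G(1)=\langle b_0,\dots,b_{p-1}\rangle$ into $G\times\overset{p}{\ldots}\times G$; composing with the $p$ projections $G\to G/\st_G(1)\cong\langle a\rangle\cong\Z/p\Z$ yields a homomorphism $\theta\colon\st_G(1)\to(\Z/p\Z)^p$. By Remark~\ref{image of st_G(k)} (applied with $k=2$) together with the definition of the second level stabiliser, an element $g\in\st_G(1)$ lies in $\ker\theta$ exactly when all of its components $g_1,\dots,g_p$ belong to $\st_G(1)$, that is, exactly when $g\in\st_G(2)$. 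Hence $\theta$ induces an injection $\bar\theta\colon V\hookrightarrow(\Z/p\Z)^p$.

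Evaluating $\bar\theta$ on the generators, and using that $b\in\st_G(1)$ maps to $0$ while $a^{e_r}$ maps to $e_r$, one reads off from the formulas for $\psi(b_k)$ that $\bar\theta(b^{a^k})=v_k$, where $v_k\in\F_p^{\,p}$ is the $k$-th cyclic shift of the vector $v_0=(e_1,\dots,e_{p-1},0)$. In other words, the $v_k$ are the rows of the circulant matrix $C(\bold{e},0)$, which is consistent with $\dim V$ being the rank of $C(\bold{e},0)$ by Proposition~\ref{properties of G2}(i).

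Now suppose $b^{a^i}\equiv b^{a^j}\pmod{\st_G(2)}$ with $0\le i,j\le p-1$ and $i\ne j$. Applying $\bar\theta$ gives $v_i=v_j$, so $v_0$ is fixed by the $(i-j)$-th power of the cyclic shift on $\F_p^{\,p}$. Since $i-j\not\equiv 0\pmod p$ and $p$ is prime, this power generates the whole cyclic shift group, so $v_0$ is fixed by every cyclic shift; thus all coordinates of $v_0$ coincide, and since its last coordinate is $0$ this forces $e_1=\dots=e_{p-1}=0$, contradicting the fact that a GGS-group has nonzero defining vector. Hence $i=j$. The only step needing real care is the identification $\ker\theta=\st_G(2)$: one must remember that the components of an element of $\st_G(1)$ are automorphisms of the subtrees hanging from the root, and that fixing level $2$ of $\TT$ is the same as each component fixing level $1$ of its subtree. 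Everything else is a direct unwinding of the recursion defining the $b_i$.
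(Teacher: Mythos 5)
Your proof is correct, but it takes a genuinely different route from the paper's. The paper argues by contradiction at the level of the whole quotient: if $i\neq j$ then $a^{j-i}$ and $b$ generate $G$, so the hypothesis $[b,a^{j-i}]\in\st_G(2)$ would make $G/\st_G(2)$ abelian, whereas Lemma \ref{rank of C}(i) forces the rank of $C(\bold{e},0)$ to be at least $2$, so that by Proposition \ref{properties of G2}(ii) the quotient $G/\st_G(2)$ is a $p$-group of maximal class of order at least $p^3$ and hence non-abelian. You instead linearise: you identify the images of the $b_k$ in $\st_G(1)/\st_G(2)$ with the rows of the circulant matrix $C(\bold{e},0)$ via the homomorphism $\theta$, and show that two rows can coincide only if the defining vector is invariant under a nontrivial cyclic shift, hence (being $0$ in its last coordinate) identically zero, contradicting $\bold{e}\neq 0$. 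Your key identification $\ker\theta=\st_G(2)$ is sound — one inclusion is Remark \ref{image of st_G(k)}, the other is immediate from the definition of the second level stabiliser — and the cyclic-shift step correctly uses that $p$ is prime so that $i-j$ generates $\Z/p\Z$. What your approach buys is self-containedness: it needs only the defining recursion and $\bold{e}\neq 0$, and in effect re-proves the portion of Proposition \ref{properties of G2}(i) that the paper outsources to the cited results of Fern\'andez-Alcober and Zugadi-Reizabal; the paper's proof is shorter precisely because it leans on that machinery.
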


\begin{proof}
Suppose, on the contrary, that $i\neq j$. Then $G=\langle a^{j-i},  b \rangle$. The congruence $b^{a^i}\equiv b^{a^j} \pmod{\st_G(2)}$ implies that
$[b, a^{j-i}] \in \st_G(2)$, and so $G/\st_G(2)$ is abelian. However, by Lemma 
\ref{rank of C}(i), the rank of the circulant matrix $C(\bold{e}, 0)$ is at least $2$. Thus, $G/\st_G(2)$ is  a $p$-group of maximal class of order at least $p^3$, and hence it is not abelian.
\end{proof}

The following proposition is the key result for determining the existence of a Beauville structure for $G_3$.

\begin{pro}
\label{key pro}
Let $G$ be a periodic GGS-group. If
\[
\langle (ab^i)^p\rangle= \langle (ab^j)^p\rangle^g
\]
for $1\leq i, j \leq p-1$ and $g \in G_3$, then $i=j$. 
\end{pro}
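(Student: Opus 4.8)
The plan is to analyze the displayed equation~(\ref{pth power of ab^i}) at the level of first-level components, using the fact that conjugation in a GGS-group acts in a controlled way on components. First I would recall from Lemma~\ref{orders of ab^i} (and its proof) that
\[
\psi\big((ab^i)^p\big)=\big((b^i)^{a^{ie_1}},(b^i)^{a^{i(e_1+e_2)}},\dots,(b^i)^{a^{i(e_1+\dots+e_{p-2})}},b^i,b^i\big),
\]
so that, working modulo $\st_G(3)$, the components lie in $G_2=G/\st_G(2)$ and are conjugates of the power $b^i$ by powers of $a$. The hypothesis $\langle(ab^i)^p\rangle=\langle(ab^j)^p\rangle^g$ in $G_3$ means there is an integer $\lambda$ coprime to $p$ with $(ab^i)^p\equiv\big((ab^j)^p\big)^{\lambda g}\pmod{\st_G(3)}$.

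Next I would split into two cases according to whether $g\in\st_G(1)$ or not. If $g\in\st_G(1)$, then $g$ acts componentwise on $\psi(\st_G(2))$, so comparing, say, the last two components gives $b^i\equiv (b^j)^{\lambda h}\pmod{\st_G(2)}$ for suitable $h\in G$ (a component of $g$); since $\langle b\rangle$ intersects $\st_G(2)$ trivially by Proposition~\ref{description of G} together with Proposition~\ref{properties of G2} (in $G_2$ the image of $b$ has order $p$, and conjugation cannot change a power $b^i$ into $b^j$ with $i\ne\lambda j$ because $G_2$ is of maximal class and $b$ maps to a generator of an order-$p$ abelian piece — here I must be careful and instead use that in $G_2$, $\langle b\rangle$ is self-normalizing modulo nothing, or more simply that $[b,a]\notin\st_G(2)$ forces any conjugate of $b$ by a non-trivial power of $a$ to differ from $b$). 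The cleanest route: a conjugate $(b^j)^{\lambda h}$ is again a generator of a conjugate of $\langle b\rangle$ in $G_2$; matching it with $b^i$ and using that the $b_k$ are the only conjugates of $b$ of interest, Lemma~\ref{conjugates of b} then pins down the relevant power of $a$ and forces $i\equiv\lambda j$, but then looking at a third component (with a different twist $a^{ie_1}$ versus $a^{je_1}$) forces the twists to agree, and this yields $i=j$.

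If instead $g\notin\st_G(1)$, write $g=a^k g_0$ with $1\le k\le p-1$ and $g_0\in\st_G(1)$; conjugation by $a^k$ cyclically permutes the $p$ components of $\psi(\st_G(2))$ by $k$ places. So the multiset of components of $\psi\big((ab^j)^p\big)$ is cyclically shifted and then conjugated componentwise. Comparing with the components of $\psi\big((ab^i)^p\big)$, and using that exactly two of the components equal $b^i$ (resp.\ $b^j$) while the others are genuinely $a$-twisted conjugates — which I would verify are not themselves equal to $b^i$ modulo $\st_G(2)$ using Lemma~\ref{conjugates of b}, since the twisting exponents $i(e_1+\dots+e_\ell)$ are not all $\equiv 0$ (as $\mathbf e\ne 0$ and $p$ is prime) — the positions of the two untwisted components must match up, which forces $k\equiv 0\pmod p$, a contradiction. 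Hence $g\in\st_G(1)$ and we are back in the first case.

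The main obstacle I expect is the bookkeeping in the first case: showing rigorously that from $b^i\equiv(b^j)^{\lambda a^r}\pmod{\st_G(2)}$ one can conclude $i\equiv\lambda j\pmod p$ and $r\equiv 0\pmod p$. This needs the structure of $G_2$ as a maximal-class $p$-group together with Proposition~\ref{comm of st_G(1)} and Lemma~\ref{conjugates of b}; in particular one must rule out that a power of $b$ is conjugate into a \emph{different} power of $b$, which is where the periodicity hypothesis (guaranteeing $\exp G_2=p$, hence $b$ has order exactly $p$ in $G_2$) and the precise rank of $C(\mathbf e,0)$ enter. Once that lemma-level fact is isolated, the rest is a finite check over the at most $p-1$ possibilities for the twisting exponents.
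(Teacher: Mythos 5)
Your opening move --- comparing a single component of $\psi_3\big((ab^i)^p\big)$ with the corresponding component of the conjugate, using that $\st_{G_2}(1)$ is abelian to reduce to $b^{\,i-\lambda j}\in G_2'$ and hence $i\equiv\lambda j\pmod p$ --- is exactly the paper's first step, and it is correct. After that, however, both branches of your case division have genuine gaps.

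In the case $g\in\st_{G_3}(1)$, the components $g_1,\dots,g_p$ of $g$ are elements of $G_2$ whose images modulo $\st_{G_2}(1)$ are powers $a^{m_\ell}$ that need not be trivial; matching the $\ell$-th components only yields $m_\ell\equiv(i-j)(e_1+\cdots+e_\ell)$, and nothing ``forces the twists to agree''. The only leverage is that $(g_1,\dots,g_p)$ lies in $\psi_3(\st_{G_3}(1))$, a proper subdirect product. The obvious consequence of this (namely $\sum_\ell m_\ell\equiv 0$, valid because $\sum_k e_k=0$) gives $(i-j)\sum_\ell(e_1+\cdots+e_\ell)\equiv 0$, but the coefficient $\sum_\ell(e_1+\cdots+e_\ell)\equiv-\sum_k k e_k$ can vanish mod $p$ (it does precisely when $1$ is a root of $f$ of multiplicity at least $2$, e.g.\ $\mathbf{e}=(1,-2,1,0)$ for $p=5$), so even this refined version of your argument does not close. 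The paper instead renormalizes $g$ by multiplying by suitable elements $b_\ell^{-j}$ and then compares $h_0^a$ with $h_0b^{\,j-i}$ modulo $\st_{G_2}(1)\times\cdots\times\st_{G_2}(1)$, concluding $b^{\,i-j}[h_0,a]\in\st_{G_3}(2)\leq G_3'$ and hence $i=j$; that is the step your sketch is missing. The case $g\notin\st_{G_3}(1)$ is worse: the conclusion you aim for ($k\equiv 0\pmod p$, contradiction) is simply false, since for instance $g=ab^i$ centralizes $(ab^i)^p$ and so satisfies the hypothesis with $i=j$ without stabilizing the first level. Your position-matching argument breaks because after the cyclic shift one still conjugates componentwise by the components of $g_0$, and these can introduce or remove an $a$-twist at any individual position, so the set of ``untwisted'' positions is not an invariant you are entitled to compare. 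The paper handles a nontrivial shift $a^s$ not by excluding it but by absorbing it into the recursive renormalization.
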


\begin{proof}
For simplicity we write $w_i=(ab^i)^p$ for every $1\leq i \leq p-1$.
Then the equality $\langle w_i\rangle= \langle w_j\rangle^g$ implies that
\begin{equation}
\label{coincide}
w_i^k=w_j^g
\end{equation}
for some $1\leq k \leq p-1$.
By (\ref{pth power of ab^i}),  we have 
\begin{equation}
\label{pth power of ab^i in G_3}
\psi_3(w_i)=\big( (b^i)^{a^{ie_1}}, (b^i)^{a^{i(e_1+e_2)}}, \dots, b^i, b^i  \big).
\end{equation}
By Remark \ref{image of st_{G_n}(k)}, each component in the equation (\ref{pth power of ab^i in G_3}) is an element of $G_2$.

Since $\st_{G_2}(1)$ is abelian, the components of $\psi_3(w_j^g)$ are of the form $(b^j)^{a^m}$, with $m \in \{0, \dots, p-1\}$. Also one of the components of $\psi_3(w_i^k)$ is $b^{ik}$. Then by equality (\ref{coincide}), we have  $(b^j)^{a^m}=b^{ik}$ in $G_2$ for some $m$. Thus, $b^{ik-j}=[b^j, a^m] \in G_2'$, and this is true only if $ik-j\equiv 0 \pmod{p}$. Therefore, $k \equiv i^{-1}j \pmod{p}$, and hence we have $w_i^{i^{-1}j}=w_j^g$. If we write $x_i=w_i^{i^{-1}}$ for every $1\leq i \leq p-1$, then we have 
\[
x_i=x_j^g.
\]
Note that
\[
\psi_3(x_i)=\big( b^{a^{ie_1}}, b^{a^{i(e_1+e_2)}}, \dots, b, b\big)
\]
for every $1\leq i \leq p-1$.

Let $g=a^sh_s$ for some $0 \leq s \leq p-1$ and $h_s \in \st_{G_3}(1)$. Observe that
\[
\psi_3(x_j^{a^s})=\big(b^{a^{j(e_1+\dots+e_{p-(s-1)})}}, \dots, b, b, \dots, b^{a^{j(e_1+\dots+ e_{p-s})}}  \big),
\]
where the first $b$ appears at the $(s-1)$st component. Then the equality $x_i=x_j^g$ implies that
\begin{align*}
\psi_3(h_s)= \big( a^{ie_1-j(e_1+\dots+e_{p-(s-1)})}u_1, a^{i(e_1+e_2)- j(e_1+ \dots+ e_{p-(s-2)})}&u_2 ,  \dots, \\
 &a^{-j(e_1+\dots+e_{p-s})}u_p  \big),
\end{align*}
where $u_{\ell} \in \st_{G_2}(1)$ for all $1\leq \ell \leq p$. We next define recursively elements $h_{i-1}=h_ib_{i-1}^{-j}$ for $i=s, \dots, 1$. Now since $G$ is periodic, we have $e_1+\dots + e_{p-1}=0$ and consequently
\[
\psi_3(h_0)=\big( a^{(i-j)e_1} v_1, a^{(i-j)(e_1+e_2)}v_2, \dots, v_{p-1}, v_p \big),
\]
with $v_{\ell} \in \st_{G_2}(1)$ for all $1\leq \ell \leq p$. 

Notice that we have
\[
\psi_3(h_0^{a}) \equiv \psi_3(h_0b^{j-i}) \pmod{\st_{G_2}(1) \times \overset{p}{\ldots} \times \st_{G_2}(1)}.
\]
Hence $\psi_3(b^{i-j}[h_0,a]) \in \big(\st_{G_2}(1) \times \overset{p}{\ldots} \times \st_{G_2}(1) \big) \cap \psi_3(\st_{G_3}(1))=\psi_3(\st_{G_3}(2))$. Thus, $b^{i-j}[h_0,a] \in \st_{G_3}(2)$, and hence  $b^{i-j} \in \st_{G_3}(2)$. This implies that $i=j$.
\end{proof}

 We are now ready to  prove that $G_3$ is a Beauville group. We deal separately with the cases $p\geq 5$ and $p=3$.

\begin{thm}
\label{ G_3 periodic}
Let $G$ be a periodic GGS-group over the $p$-adic tree. If $p\geq 5$ then the quotient $G/\st_G(3)$ is a Beauville group.
\end{thm}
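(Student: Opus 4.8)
The plan is to exhibit an explicit Beauville structure $\{x_1,y_1\},\{x_2,y_2\}$ for $G_3$ built out of the elements whose power structure we already control. A natural first candidate for one pair is $\{a,b\}$, the standard generators, since $\langle a\rangle$ and $\langle b\rangle$ are conjugates of the two ``obvious'' cyclic subgroups of order $p$ and $ab$ can be analyzed directly; for the second pair I would take something like $\{ab, ab^{-1}\}$ or, following the computations in Lemma \ref{orders of ab^i} and Proposition \ref{key pro}, a pair of the form $\{ab^i, a^{-1}b^j\}$ so that the relevant cyclic subgroups are generated by $a$, $b$-conjugates, and the elements $(ab^i)^p$, whose conjugacy behaviour is exactly what Proposition \ref{key pro} pins down. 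The point of choosing the generators among $a$, the $b_i$'s and the $ab^i$'s is that $\Sigma(x,y)$ for such a pair is, up to conjugacy, controlled by: (a) $\langle a\rangle$ and its conjugates, which all lie inside $\langle a\rangle \st_G(1)\setminus \st_G(1)$-type cosets; (b) $\langle b_i\rangle = \langle b^{a^i}\rangle$, governed by Lemma \ref{conjugates of b}; and (c) $\langle (ab^i)^p\rangle$ together with the ``diagonal'' part $\langle ab^i\rangle$, governed by Lemma \ref{orders of ab^i} and Proposition \ref{key pro}.

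The key steps, in order, would be: first, record that every element of $\Sigma(x_1,y_1)$ lies in one of finitely many conjugacy-class-closed ``types'' determined by the image in $G/\st_G(1)\cong C_p$ and, within $\st_G(1)$, by the image in $G/G'$ or $G/\st_G(1)'$ (using Proposition \ref{description of G} and Proposition \ref{comm of st_G(1)}); this reduces the intersection condition to checking it on a small list of cyclic subgroups. Second, handle the elements mapping nontrivially to $G/\st_G(1)$: for the pair $\{a,b\}$ the only such contributions in $\Sigma(a,b)$ come from conjugates of $\langle a\rangle$ and of $\langle ab\rangle$, while for the second pair they come from conjugates of $\langle ab^i\rangle$ and $\langle a^{-1}b^j\rangle$ (and their product); Lemma \ref{orders of ab^i} tells us $ab^i$ and $a^{-1}b$ have order $p^2$, so their cyclic subgroups meet $\st_G(1)$ in $\langle (ab^i)^p\rangle$, and Proposition \ref{key pro} guarantees these subgroups are non-conjugate for distinct $i$, hence can be separated from the order-$p$ subgroup $\langle a\rangle^g$ and from each other. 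Third, handle the elements lying in $\st_G(1)$: these are covered by conjugates of $\langle b\rangle$ (equivalently the $\langle b_i\rangle$) for the first pair and by the subgroups $\langle (ab^i)^p\rangle$ for the second; Lemma \ref{conjugates of b} separates the $b$-part from any power of $a$, and a direct check via the formula $\psi_3((ab^i)^p)=\big((b^i)^{a^{ie_1}},\dots,b^i,b^i\big)$ together with Lemma \ref{conjugates of b} applied in the first-level components shows $\langle b\rangle^g \cap \langle (ab^i)^p\rangle^h = 1$. Assembling these three steps gives $\Sigma(x_1,y_1)\cap\Sigma(x_2,y_2)=1$.

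Two preliminary verifications must be dispatched before the above goes through: that $G_3$ is genuinely $2$-generated (immediate, since it is a quotient of $G=\langle a,b\rangle$) and that each chosen pair actually generates $G_3$. Generation is where $p\ge 5$ will enter: for a pair like $\{ab^i,a^{-1}b^j\}$ one checks that the subgroup it generates surjects onto $G/\st_G(2)=G_2$, a $p$-group of maximal class, and hence (being a $2$-generated subgroup hitting $G_2$) equals all of $G_3$ by a standard maximal-class / Burnside-basis argument; verifying surjectivity onto $G_2$ amounts to the circulant-rank bookkeeping already available through Proposition \ref{properties of G2} and Lemma \ref{rank of C}. The hypothesis $p\ge5$ is also what makes room to pick $i\ne j$ with $i,j$ and the various derived indices all nonzero mod $p$, which is exactly what Proposition \ref{key pro} and Lemma \ref{orders of ab^i} need; for $p=3$ there is simply not enough room, which is why that case is deferred to a separate theorem.

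The main obstacle I anticipate is the ``mixed'' part of Step two and three for the second pair: showing that a conjugate of $\langle ab^i\rangle$ (which has order $p^2$, so contains elements both outside and inside $\st_G(1)$) cannot meet a conjugate of $\langle a\rangle$ or of $\langle ab^j\rangle$ in anything nontrivial. Outside $\st_G(1)$ this is a statement about which order-$p^2$ elements are conjugate — controllable because conjugation preserves the $\st_G(1)$-coset and $G/\st_G(1)$ is cyclic of order $p$, so one reduces to comparing $p$-th powers, i.e. to Proposition \ref{key pro}. Inside $\st_G(1)$ it is precisely the content of Proposition \ref{key pro} again. So the real work is organizational: carefully enumerating the conjugacy types contributing to each $\Sigma$ and confirming that the two lists are disjoint away from the identity, with all the non-conjugacy facts supplied by Lemmas \ref{orders of ab^i}, \ref{conjugates of b} and Proposition \ref{key pro}. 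I expect the write-up to consist of choosing explicit $i,j$ (e.g.\ $\{ab,ab^2\}$ type choices adjusted so all indices are units mod $p$), verifying generation via $G_2$, and then a short case analysis closing the intersection.
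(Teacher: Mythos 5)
Your high-level architecture is the same as the paper's: separate the two generating systems modulo $\Phi(G_3)$, dispose of the order-$p$ entries immediately, and compare the order-$p^2$ entries through their $p$-th powers via Proposition \ref{key pro}. But there is a genuine gap in the choice of generating pairs, and that choice is exactly where the difficulty of the theorem sits. The constraint you are not enforcing is that \emph{all six} elements $x_1$, $y_1$, $x_1y_1$, $x_2$, $y_2$, $x_2y_2$ must simultaneously be of a form whose cyclic subgroup your lemmas control (namely $a^k$, $b$, or something conjugate to $(ab^i)^{\pm1}$), \emph{and} the two resulting triples must be pairwise independent modulo $\Phi(G_3)$. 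Every concrete candidate you float violates one of these. With $\{a,b\}$ and $\{ab,ab^{-1}\}$ the element $ab$ lies in both triples, so $\Sigma(a,b)\cap\Sigma(ab,ab^{-1})\supseteq\langle ab\rangle\neq1$ and the structure fails outright. With $\{a,b\}$ and $\{ab^i,a^{-1}b^j\}$ the forced product $ab^i\cdot a^{-1}b^j=(b^i)^{a^{-1}}b^j$ lies in $\st_{G_3}(1)$, i.e.\ in the same maximal subgroup as $b$ from the first triple, so the mod-Frattini separation breaks there, and none of Lemma \ref{orders of ab^i}, Lemma \ref{conjugates of b} or Proposition \ref{key pro} says anything about the cyclic subgroup generated by that product. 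Likewise $\{ab,ab^2\}$ has product $a^2b^ab^2$, again of uncontrolled form.

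The paper resolves this by taking $\{a^{-2},ab\}$ and $\{ab^2,b\}$, whose triples are $X=\{a^{-2},ab,a^{-1}b\}$ and $Y=\{ab^2,b,ab^3\}$: every entry is a power of $a$, equal to $b$, or conjugate to some $(ab^i)^{\pm1}$ (using $(a^{-1}b)^{-1}=(ab^{p-1})^{b}$), and the six images in $G_3/\Phi(G_3)\cong C_p\times C_p$ span lines of slopes $0,1,-1$ for $X$ versus $2,\infty,3$ for $Y$, which are disjoint precisely when $p\geq5$. Two smaller corrections: generation is immediate from the Burnside basis theorem once the images modulo $\Phi(G_3)$ span, so no surjectivity-onto-$G_2$ bookkeeping is needed; and the separate fact $\langle b\rangle^g\cap\langle(ab^i)^p\rangle^h=1$ that you plan to prove via first-level components is unnecessary, since $b\notin\Phi(G_3)$ while $\langle(ab^i)^p\rangle\leq\Phi(G_3)$.
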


\begin{proof}
Note that $\{a^{-2}, ab \}$ and $\{ab^2, b\}$ are both systems of generators of $G_3=G/\st_G(3)$. We claim that they yield a Beauville structure for $G_3$. If $X=\{ a^{-2}, ab, a^{-1}b \} $ and $Y=\{ab^2, b, ab^3 \}$, we have to see that
\begin{equation}
\label{check}
\langle x \rangle \cap \langle y^g \rangle=1
\end{equation}
for all $x\in X$, $y\in Y$, and $g\in G_3$.
Observe that $\langle x\Phi(G_3) \rangle$ and $\langle y\Phi(G_3) \rangle$ have trivial intersection for every $x\in X$ and $y\in Y$, since $a$ and $b$ are linearly independent modulo $\Phi(G_3)$. Thus, $x$ and $y^g$ lie in different maximal subgroups of $G_3$ in every case.

Assume first that $x=a^{-2}$, which is an element of order $p$.
If (\ref{check}) does not hold, then $\langle x \rangle \subseteq \langle y^g \rangle$, and consequently $\langle x\Phi(G_3) \rangle=\langle y\Phi(G_3) \rangle$, which is a contradiction. The same argument holds if $y=b$ since it is also of order $p$.

The remaining elements in $X$ and $Y$ are of order $p^2$, by Lemma \ref{orders of ab^i}. Thus, in order to prove our claim, we need to show that 
\begin{equation}
\label{pth power check}
\langle x^p \rangle \neq \langle y^p\rangle^g
\end{equation}
for all $g \in G_3$, and for those $x \in X$ and $y\in Y$. If $x=ab$ and $y=ab^2$ or $ab^3$, then we apply Proposition \ref{key pro} to conclude that (\ref{pth power check}) holds.

It remains to deal with $x=a^{-1}b$ and $y=ab^2$ or $y=ab^3$. Since $(a^{-1}b)^{-1}=(ab^{p-1})^b$, if (\ref{pth power check}) does not hold, then 
$\langle (ab^{p-1})^p\rangle^b=\langle y^p\rangle^g$, that is, 
\[
\langle (ab^{p-1})^p\rangle=\langle y^p\rangle^{gb^{-1}},
\]
for some $g \in G_3$. This contradicts with Proposition \ref{key pro}, and hence (\ref{pth power check}) holds. This completes the proof.
\end{proof}


Now we assume that $p=3$.

Since proportional nonzero vectors define the same GGS group, if $G$ is periodic and $p=3$, then the defining vector of $G$ can only be $\bold{e}=(1,-1)$. So $G$ is the Gupta-Sidki $3$-group. In this case, observe that the rank of $C(\bold{e}, 0)$ is $2$. Then Proposition \ref{properties of G2}(ii), together with Proposition \ref{description of G}(iii), implies that $\st_G(2)=\gamma_3(G)$, which is of index $3^3$ in $G$. By Proposition \ref{non-symmetric}(ii), the order of $G_3$ is $3^7$. Since the smallest Beauville $3$-group is of order $3^5$ \cite{BBF}, we can ask whether $G_3$ is Beauville or not.


\begin{lem}
\label{center}
 Let $G=\langle a, b\rangle$ be the Gupta-Sidki $3$-group. Then $Z(G_3)$ is a subgroup of $\st_{G_3}(1)'$ of order $3$. More precisely, if $Z(G_3)=\langle z\rangle$ then 
 \begin{equation}
 \label{image of center}
 \psi_3(z)=([a,b], [a,b], [a,b]).
 \end{equation}
\end{lem}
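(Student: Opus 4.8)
The plan is to compute $Z(G_3)$ directly through the map $\psi_3$, exploiting the maximal-class structure we already know. For the Gupta-Sidki $3$-group, $\st_G(2)=\gamma_3(G)$, so $G_2$ is the maximal-class group of order $3^3$ (an extraspecial group, in fact $3^{1+2}_+$ or $\Z/9$, but here maximal class of order $3^3$ means the non-abelian group of exponent $3$), and its center $Z(G_2)$ is $G_2'=[a,b]\st_G(2)$, of order $3$. Since $G_3/\st_{G_3}(1)\cong\langle a\rangle$ has order $3$ and acts nontrivially, any central element of $G_3$ must lie in $\st_{G_3}(1)$. So first I would argue $Z(G_3)\leq\st_{G_3}(1)$, and then transport the problem across $\psi_3$: an element $h\in\st_{G_3}(1)$ with $\psi_3(h)=(h_1,h_2,h_3)$ is central in $G_3$ if and only if it commutes with both $a$ and $b$. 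Commuting with $a$ forces $h_1=h_2=h_3=:h_0$ (since conjugation by $a$ cyclically permutes the coordinates), so $\psi_3(h)=(h_0,h_0,h_0)$ with $h_0\in G_2$.

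Next I would use the condition $[h,b]=1$. Writing $\psi_3(b)=(a^{e_1},a^{e_2},b)=(a,a^{-1},b)$ for the Gupta-Sidki vector $\bold{e}=(1,-1)$, the relation $[h,b]=1$ becomes the system $[h_0,a]=1$, $[h_0,a^{-1}]=1$, $[h_0,b]=1$ in $G_2$ — i.e. $h_0\in Z(G_2)$. Wait, the first two are the same condition; combined with $[h_0,b]=1$ this says $h_0$ centralizes $\langle a,b\rangle=G$ modulo $\st_G(2)$, hence $h_0\in Z(G_2)=G_2'=\langle [a,b]\rangle$. Therefore $\psi_3(h)\in\{([a,b]^k,[a,b]^k,[a,b]^k):k\in\F_3\}$, giving $|Z(G_3)|\leq 3$. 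Conversely I must check that $z$ with $\psi_3(z)=([a,b],[a,b],[a,b])$ genuinely defines an element of $G_3$ (not just of $\Aut\TT_3$) lying in the center: the cleanest way is to exhibit $z$ as an explicit word, for instance verifying that $[[a,b],a]$ or a short commutator in $\st_{G_3}(1)'$ has this image — indeed by Proposition \ref{non-symmetric}(i) (noting $\bold{e}=(1,-1)$ is non-symmetric), $\psi(\st_G(1)')=G'\times G'\times G'$, so there is $w\in\st_{G_3}(1)'$ with $\psi_3(w)=([a,b],[a,b],[a,b])$, and this $w$ is nontrivial because $[a,b]\notin\st_G(2)$. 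Since such $w$ commutes with $a$ (coordinates equal) and with $b$ (each coordinate central in $G_2$), $w\in Z(G_3)$, so $Z(G_3)=\langle z\rangle$ with $z=w$ has order exactly $3$, and it lies in $\st_{G_3}(1)'$.

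The main obstacle I anticipate is bookkeeping inside $G_2$: one must be careful that the coordinate equations $[h_0,a]=[h_0,b]=1$ are being solved in the finite quotient $G_2=G/\st_G(2)$ and not in $G$ or $\Aut\TT$, and that $Z(G_2)$ really is $G_2'$ of order $3$ — this is where Proposition \ref{properties of G2}(ii) (maximal class) plus $\st_G(2)=\gamma_3(G)$ enter. A secondary point is confirming $[a,b]$ has order $3$ in $G_2$ (equivalently $[a,b]\notin\st_G(2)$), which follows from $|G:\gamma_3(G)|=3^3$ in Proposition \ref{description of G}(iii) together with $\st_G(2)=\gamma_3(G)$. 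Once these structural facts are in hand, the argument is a short two-sided containment.
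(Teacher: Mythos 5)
Your proof is correct and follows essentially the same route as the paper: reduce to $\st_{G_3}(1)$, push the centrality condition through $\psi_3$ to land in the diagonal of $Z(G_2)\times Z(G_2)\times Z(G_2)=G_2'\times G_2'\times G_2'$, and use Proposition \ref{non-symmetric}(i) to realize $([a,b],[a,b],[a,b])$ inside $\st_{G_3}(1)'$. The only cosmetic difference is that you obtain the coordinate condition $h_0\in Z(G_2)$ from the explicit action of $b$ on the coordinates, where the paper invokes the subdirect-product property of $\st_G(1)$; your explicit check of the converse inclusion (that the diagonal element really is central) is, if anything, slightly more complete than the paper's.
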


\begin{proof}
First of all, we observe that  $Z(G_3) \leq \st_{G_3}(1)$. Otherwise, if there were $z\in Z(G_3)$ such that 
$z\notin  \st_{G_3}(1)$, then $G_3=\langle z, b \rangle$ would be an abelian group, which is not true.

Since for every $n\geq 1$, $\st_G(n)$  is a subdirect product of $3^n$ copies of $G$, this, together with 
\[
\psi_3(\st_{G_3}(1)) \subseteq G_2\times G_2 \times G_2,
\]
implies that
\begin{equation}
\label{image of Z(G_3)}
\psi_3(Z(G_3)) \subseteq Z(G_2)\times Z(G_2) \times Z(G_2)=G_2'\times G_2' \times G_2'.
\end{equation}
On the other hand, by Proposition \ref{non-symmetric} (i), we have
\begin{equation}
\label{image of st(1)'}
\psi_3(\st_{G_3}(1)')=G_2'\times G_2' \times G_2'.
\end{equation}
Thus, by (\ref{image of Z(G_3)}) and (\ref{image of st(1)'}), we have $Z(G_3) \leq \st_{G_3}(1)'$. Let $z \in Z(G_3)$. Since $z=z^a$, this yields that $\psi_3(z)=(c,c,c)$ for  some $c \in G_2'=\langle [a,b]\rangle$. Hence $|Z(G_3)|=3$ and $Z(G_3)=\langle z\rangle$, where $\psi_3(z)=([a,b], [a,b], [a,b])$.
\end{proof}

\begin{lem}
\label{comms of b}
Let $G=\langle a, b\rangle$ be the Gupta-Sidki $3$-group. Then
\[
Z(G_3) \cap \{ [b,g] \mid g \in G_3\}=1.
\]
\end{lem}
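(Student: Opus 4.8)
The plan is to argue by contradiction using the description of the center from Lemma~\ref{center}. Suppose $z \in Z(G_3)$ is a nontrivial commutator, say $z = [b,g]$ for some $g \in G_3$. Since $Z(G_3) = \langle z \rangle$ has order $3$ and $z \in \st_{G_3}(1)'$, we have $g \notin \st_{G_3}(1)$; indeed if $g \in \st_{G_3}(1)$ then $[b,g] \in \st_{G_3}(1)$ but we would need more care, so first I would reduce to a convenient form of $g$. Write $g = a^s h$ with $s \in \{0,1,2\}$ and $h \in \st_{G_3}(1)$. Because $[b, a^s h] = [b,h]\,[b,a^s]^h$ and we want to compute this modulo the second level, the key is to track everything through $\psi_3$ and use that $\psi_3(z) = ([a,b],[a,b],[a,b])$ by \eqref{image of center}.

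The main computation I would carry out: apply $\psi_3$ to $[b,g]$. Using $\psi_3(b) = (a^{e_1}, a^{e_2}, b) = (a, a^{-1}, b)$ for the Gupta-Sidki group, and the coordinate form of $a^s$ (which rigidly permutes the three coordinates while contributing nothing inside them), one finds that the three coordinates of $\psi_3([b,g])$ cannot all be equal to $[a,b]$. More precisely, if $s \neq 0$ then $[b,a^s]$ moves the first level nontrivially, so $[b,g] = [b,h][b,a^s]^h \notin \st_{G_3}(1)$ unless the first-level parts cancel — but $[b,h] \in \st_{G_3}(1)$ always, so we would need $[b,a^s]^h \in \st_{G_3}(1)$, forcing $s = 0$. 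Hence $g \in \st_{G_3}(1)$ and $z = [b,h]$ with $h \in \st_{G_3}(1)$. Now I compute $\psi_3([b,h])$ coordinatewise: if $\psi_3(h) = (h_1, h_2, h_3)$ then, since $\psi_3(b) = (a, a^{-1}, b)$, we get $\psi_3([b,h]) = ([a, h_1], [a^{-1}, h_2], [b, h_3])$. For this to equal $([a,b],[a,b],[a,b])$ we need in particular $[a, h_1] = [a,b]$ in $G_2$ and $[a^{-1},h_2] = [a,b]$ in $G_2$. Working in $G_2$, which is a maximal-class group of order $3^3$ with $G_2' = \langle [a,b]\rangle$ of order $3$, the map $x \mapsto [a,x]$ from $G_2$ to $G_2'$ has image of order $3$ (it is surjective onto $G_2'$ since $[a,b]$ generates), so $[a,h_1] = [a,b]$ forces $h_1 \equiv b \pmod{\,\langle a, G_2'\rangle\,}$, i.e. $h_1 a^{-1} \cdots$ lies in a specific coset; similarly $[a^{-1}, h_2] = [a,b]$ means $[a, h_2] = [a,b]^{-1}$, forcing $h_2$ into the coset of $b^{-1}$. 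The contradiction will come from the third coordinate together with the constraint that $h \in \st_{G_3}(1)$ (so $(h_1,h_2,h_3) \in \psi_3(\st_{G_3}(1))$, a subdirect-type constraint), combined with the requirement $[b, h_3] = [a,b]$: the element $h_3$ would have to be congruent to $a$ modulo $\st_{G_2}(1)$, but then $h_3 \notin \st_{G_2}(1)$, and I must reconcile this with the structural description of $\psi_3(\st_{G_3}(1))$ to derive the contradiction.

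The step I expect to be the main obstacle is precisely pinning down which triples $(h_1, h_2, h_3)$ actually occur as $\psi_3(h)$ for $h \in \st_{G_3}(1)$ — that is, exploiting Proposition~\ref{non-symmetric}(i) (which gives $\psi_3(\st_{G_3}(1)') = G_2' \times G_2' \times G_2'$) and Remark~\ref{image of st_{G_n}(k)} together with the maximal-class structure of $G_2$ to show that no admissible triple can simultaneously satisfy $[a,h_1] = [a^{-1},h_2]^{-1} = [b,h_3] = [a,b]$. An alternative, possibly cleaner route I would keep in reserve: show directly that $z \notin [b, G_3]$ by passing to the abelianization of a suitable subgroup, or by observing that $b$ normalizes a subgroup $H$ with $z \notin [b,H]$ forced by the action of $b$ on $H/[H,H]$; but I expect the coordinatewise computation above to be the most transparent once the reduction $g \in \st_{G_3}(1)$ is in hand.
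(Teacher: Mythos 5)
Your overall shape (reduce to $g\in\st_{G_3}(1)$, then compute $\psi_3([b,g])$ coordinatewise and contradict $\psi_3(z)=([a,b],[a,b],[a,b])$) is the same as the paper's, but there are two genuine gaps.

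First, your reduction to $g\in\st_{G_3}(1)$ rests on a false claim. You assert that for $s\neq 0$ the commutator $[b,a^s]$ ``moves the first level nontrivially,'' so that $[b,g]\notin\st_{G_3}(1)$ unless $s=0$. In fact $[b,g]=b^{-1}b^{\,g}\in\st_{G_3}(1)$ for \emph{every} $g\in G_3$, because $\st_{G_3}(1)$ is normal and contains $b$; in particular $[b,a^s]=b^{-1}b^{a^s}\in\st_{G_3}(1)$ for all $s$. So membership of $[b,g]$ in the first-level stabilizer gives no information about $s$, and your forcing of $s=0$ does not go through. The reduction requires the stronger fact that $[b,g]\in Z(G_3)\leq\st_{G_3}(1)'$: passing to $Q=G_3/\st_{G_3}(1)'$, which has order $3^4$ and is of maximal class with abelian maximal subgroup $A=\st_{G_3}(1)/\st_{G_3}(1)'$, the image of $b$ lies in $A$ and is non-central, so its centralizer in $Q$ is exactly $A$, whence $g\in\st_{G_3}(1)$. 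This is the argument the paper uses, and some such structural input is unavoidable.

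Second, the endgame is not carried out: you correctly reduce to the conditions $[a,h_1]=[a^{-1},h_2]=[b,h_3]=[a,b]$ together with the constraint on which triples $(h_1,h_2,h_3)$ arise from $h\in\st_{G_3}(1)$, but you explicitly defer the verification that no admissible triple exists, and that verification is where the content of the lemma lies. The paper completes it by first disposing of the case $g\in\st_{G_3}(1)'$ (where $\psi_3(g)$ is central in $G_2\times G_2\times G_2$, so $[b,g]=1$) and then writing $g=b_0^{i_0}b_1^{i_1}b_2^{i_2}c$ with $c\in\st_{G_3}(1)'$; the second and third coordinates of $\psi_3([b,g])$ force $i_1=0$, after which the first coordinate $[a,a^{i_0}b^{i_1}a^{2i_2}]$ becomes trivial, a contradiction. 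Note also that you must allow $\psi_3([b,g])=([a,b],[a,b],[a,b])^{-1}$ as well as $([a,b],[a,b],[a,b])$, since $Z(G_3)$ has order $3$.
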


\begin{proof}
Suppose that $1\neq [b,g] \in Z(G_3)$. Then since $Z(G_3) \leq \st_{G_3}(1)'$, $b$ and $g$ commute modulo $\st_{G_3}(1)'$.  We will first show that $g\in \st_{G_3}(1)$.

Since 
$|G_3/\st_{G_3}(1)': \gamma_2(G_3/\st_{G_3}(1)') |=3^2$ and 
$\st_{G_3}(1)/\st_{G_3}(1)'$ is an abelian maximal subgroup of $G_3/\st_{G_3}(1)'$, the quotient group $G_3/\st_{G_3}(1)'$ is a $p$-group of maximal class with an abelian maximal subgroup. Then being $b \in \st_{G_3}(1)$ yields that $g\in \st_{G_3}(1)$.

If $g\in \st_{G_3}(1)'$ then by (\ref{image of st(1)'}), $\psi_3(g) \in Z(G_2\times G_2 \times G_2)$. This implies that $\psi_3([b,g])=(1,1,1)$, which is a contradiction. Hence $g \in \st_{G_3}(1) \smallsetminus  \st_{G_3}(1)'$.

Write $g=b_0^{i_0}b_1^{i_1}b_2^{i_2}c$ for some $c \in \st_{G_3}(1)'$. Then 
\begin{align*}
\psi_3([b,g])&=\big([a, a^{i_0}b^{i_1}a^{2i_2}], [a^2, a^{2i_0+i_1}b^{i_2}], [b, b^{i_0}a^{2i_1+i_2}] \big) \\
&=\big([a, a^{i_0}b^{i_1}a^{2i_2}], \ b_2^{-i_2}b_0^{i_2}, \ b_0^{-1}b_{2i_1+i_2} \big).
\end{align*}
Since $[b,g] \in Z(G_3)$, it follows that $\psi_3([b,g])=\big( [a,b], [a,b], [a,b] \big)^{\pm1}$. Note that in $G_2$, we have $b_0b_1b_2=1$, and thus $[a,b]=b_1^{-1}b_0=b_2b_0^{-1}$. Then by the second and third components, we get $i_1=0$. Then the first component will be $1$, which is a contradiction.
\end{proof}

\begin{lem}
\label{comms of a}
Let $G=\langle a, b\rangle$ be the Gupta-Sidki $3$-group. Then the element $v \in \st_{G_3}(1)'$ such that $\psi_3(v)=([a,b], 1, 1)$ is not in the set
$\{[a,g] \mid g \in G_3 \}$.
\end{lem}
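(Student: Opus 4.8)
The statement asserts that the element $v\in\st_{G_3}(1)'$ with $\psi_3(v)=([a,b],1,1)$ is not a commutator $[a,g]$ for any $g\in G_3$. The strategy mirrors the proof of Lemma~\ref{comms of b}: assume $v=[a,g]$, pin down which coset of $\st_{G_3}(1)$ and of $\st_{G_3}(1)'$ the element $g$ must lie in, write $g$ explicitly in terms of the generators $b_0,b_1,b_2$ modulo $\st_{G_3}(1)'$, compute $\psi_3([a,g])$ componentwise, and derive a contradiction from the requirement that the last two components be trivial while the first is $[a,b]$.

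\textbf{Key steps.} First I would locate $g$ modulo $\st_{G_3}(1)$. Since $v\in\st_{G_3}(1)'\leq Z(G_3)$ is central (by Lemma~\ref{center}, or at least $v\in\st_{G_3}(1)'$), and $G_3/\st_{G_3}(1)'$ is a $p$-group of maximal class with abelian maximal subgroup $\st_{G_3}(1)/\st_{G_3}(1)'$, the congruence $[a,g]\equiv 1\pmod{\st_{G_3}(1)'}$ forces $g\in\st_{G_3}(1)$: if $g\notin\st_{G_3}(1)$ then $G_3=\langle a,g\rangle$ with $a\notin\st_{G_3}(1)$ too, so the abelian maximal subgroup would have to contain both, impossible — the same maximal-class argument used in Lemma~\ref{comms of b}. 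Next, if $g\in\st_{G_3}(1)'$ then $\psi_3(g)\in Z(G_2\times G_2\times G_2)$ is diagonal, hence so is $[a,g]$ after noting $a$ cyclically permutes the three coordinates; but then all three components of $\psi_3([a,g])$ coincide, contradicting $\psi_3(v)=([a,b],1,1)$. So $g\in\st_{G_3}(1)\smallsetminus\st_{G_3}(1)'$, and I write $g=b_0^{i_0}b_1^{i_1}b_2^{i_2}c$ with $c\in\st_{G_3}(1)'$ and $(i_0,i_1,i_2)\neq(0,0,0)$ in $\F_3^3$.

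\textbf{The computation.} Using $\psi_3(b_0)=(a,a^{2},b)$, $\psi_3(b_1)=(b,a,a^{2})$, $\psi_3(b_2)=(a^{2},b,a)$ (with $\bold e=(1,-1)$, so $a^{e_1}=a$, $a^{e_2}=a^{2}$) and the fact that $a$ conjugates $\psi_3(h)=(h_1,h_2,h_3)$ to $(h_3,h_1,h_2)$ up to a shift, I compute $\psi_3([a,g])=\psi_3(a^{-1}g^{-1}ag)$ componentwise. Since $c\in\st_{G_3}(1)'$ is central with diagonal image $([a,b]^{\delta},[a,b]^{\delta},[a,b]^{\delta})$ for some $\delta$, it contributes equally to all three components and so cannot fix the asymmetry; effectively I may absorb it and work modulo the diagonal, i.e. compare the \emph{differences} of the three components. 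The three components of $\psi_3([a,g])$ will be expressions of the form $[\,\text{(a power of }a),\ (\text{a word }a^{\ast}b^{i_k}a^{\ast})\,]$ landing in $G_2'=\langle[a,b]\rangle$, and one of them (the one where the $b$ sits, exactly as in Lemma~\ref{comms of b}) is of the form $b_0^{-1}b_{\ast}$ or similar, reducing to a linear condition on the $i_k$. Imposing $\psi_3(v)=([a,b],1,1)$ — second and third components trivial, first equal to $[a,b]$ — gives a small linear system over $\F_3$; I expect it to force $i_0=i_1=i_2=0$ (or to leave only solutions making the first component trivial too), contradicting either $g\notin\st_{G_3}(1)'$ or the value of the first component.

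\textbf{Main obstacle.} The delicate point is the bookkeeping in the commutator computation: correctly tracking how $a$ permutes the three coordinates of $\psi_3$, how the exponents $2i_0+i_1$, $2i_1+i_2$, $i_0$ etc. propagate into the components, and ensuring the diagonal contribution of $c$ and of any $b_1^{i_1}$-type terms that land in $\st_{G_2}(1)'$ is handled consistently in $G_2$ (where $b_0b_1b_2=1$ and $[a,b]=b_1^{-1}b_0=b_2b_0^{-1}$). The asymmetry of the target vector $([a,b],1,1)$ is what ultimately breaks things, since any $g$ built symmetrically from the $b_i$ produces, after commuting with $a$, a vector whose coordinates are linked cyclically — so the real content is showing no exponent choice can make two coordinates vanish while the third does not. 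I would organize this by computing the sum of the three components first (which must equal $[a,b]$), then using the individual vanishing conditions to get the contradiction.
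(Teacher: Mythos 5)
Your proposal is a plan rather than a proof, and two of the auxiliary claims it rests on are false as stated. First, the reduction ``$[a,g]\equiv 1\pmod{\st_{G_3}(1)'}$ forces $g\in\st_{G_3}(1)$'' does not hold: $g=a$ (or any $a^ih$ with $h$ central) satisfies the congruence but lies outside $\st_{G_3}(1)$. The centralizer argument from the $[b,g]$ lemma does not transfer, because there it is essential that $b$ lies \emph{inside} the abelian maximal subgroup $\st_{G_3}(1)/\st_{G_3}(1)'$, whereas $a$ does not; and your fallback ``$g\notin\st_{G_3}(1)\Rightarrow G_3=\langle a,g\rangle$'' is also wrong. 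The correct reduction is trivial and does not need maximal class at all: writing $g=a^ih$ with $h\in\st_{G_3}(1)$ gives $[a,g]=[a,h]$. Second, elements of $\st_{G_3}(1)'$ do \emph{not} have diagonal $\psi_3$-image: $\psi_3(\st_{G_3}(1)')=G_2'\times G_2'\times G_2'$, and $v$ itself is a non-diagonal element of this subgroup. So the error term $c$ in $g=b_0^{i_0}b_1^{i_1}b_2^{i_2}c$ cannot be ``absorbed'' on the grounds that it contributes equally to all three coordinates, and the case $g\in\st_{G_3}(1)'$ cannot be dismissed by saying $[a,g]$ is diagonal. Finally, the core computation is left at the level of ``I expect it to force $i_0=i_1=i_2=0$,'' which is not a proof.

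The decomposition into $b_0,b_1,b_2$ is in fact unnecessary. The paper's argument takes an arbitrary $h\in\st_{G_3}(1)$ with $\psi_3(h)=(h_1,h_2,h_3)$ and computes
\[
\psi_3([a,h])=\psi_3\bigl((h^{-1})^a h\bigr)=(h_3^{-1}h_1,\ h_1^{-1}h_2,\ h_2^{-1}h_3);
\]
equating this with $([a,b],1,1)$, the triviality of the last two components forces $h_1=h_2=h_3$ in $G_2$, whence the first component is $1\neq[a,b]$. Your closing remark about ``computing the sum of the three components first'' is the germ of an equally clean argument: the ordered product of the three components above telescopes to $1$ for \emph{every} $h$, while the components of $v$ multiply to $[a,b]\neq 1$. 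Had you made that observation precise (and handled $g\notin\st_{G_3}(1)$ via $[a,a^ih]=[a,h]$), you would have a complete proof without any of the $\F_3$-linear-system bookkeeping; as written, the proposal does not get there.
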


\begin{proof}
Since $\psi_3(\st_{G_3}(1)')=G_2'\times G_2' \times G_2'$, such an element $v$ exists in $ \st_{G_3}(1)'$. Suppose that $v=[a,g]$ for some $g \in G_3$. If we write $g=a^ih$ for some $h\in \st_{G_3}(1)$ then $v=[a,h]$. Write $\psi_3(h)=(h_1, h_2, h_3)$. Then
\[
\psi_3((h^{-1})^ah)=(h_3^{-1}h_1, h_1^{-1}h_2, h_2^{-1}h_3)=([a,b], 1, 1).
\]
This implies that $h_1=h_2=h_3$ in $G_2$. Then $[a,b]=h_3^{-1}h_1=1$ in $G_2$, which is a contradiction. Thus, $v \in  \st_{G_3}(1)' \smallsetminus \{[a,g] \mid g \in G_3 \}$.
\end{proof}

In order to deal with the prime $3$, we also need the following lemma.

\begin{lem}\textup{\cite[Lemma~3.8]{FG} }
\label{intersection}
Let $G$ be a finite $p$-group and let $x \in G \smallsetminus \Phi(G)$ be an element of order $p$.
If $t \in \Phi(G)\smallsetminus \{[x,g] \mid g \in G \}$ then 
\[
\Big(\bigcup_{g\in G} {\langle x\rangle}^g  \Big)
\bigcap
\Big(\bigcup_{g\in G} {\langle xt\rangle}^g \Big)= 1.
\]
\end{lem}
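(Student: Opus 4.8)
The statement to prove is Lemma~\ref{intersection}, quoted from \cite{FG}: for a finite $p$-group $G$, an element $x \in G \smallsetminus \Phi(G)$ of order $p$, and $t \in \Phi(G)\smallsetminus \{[x,g] \mid g \in G\}$, the cyclic subgroups $\langle x\rangle$ and $\langle xt\rangle$ have no nontrivial conjugates in common, i.e. $\bigl(\bigcup_g \langle x\rangle^g\bigr)\cap\bigl(\bigcup_g \langle xt\rangle^g\bigr)=1$.

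\medskip

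\noindent\textbf{Proof plan.} The plan is to argue by contradiction: suppose some nontrivial element lies in both unions. Concretely, suppose $\langle x\rangle^{g_1}\cap\langle xt\rangle^{g_2}\neq 1$ for some $g_1,g_2\in G$. After conjugating the whole situation by $g_1^{-1}$ (which replaces $x$ by $x^{g_1^{-1}}$ — still an element of order $p$ outside $\Phi(G)$ since $\Phi(G)$ is normal — and correspondingly transforms $t$ and $xt$), I may assume $g_1=1$; so there is a common nontrivial element $z\in\langle x\rangle\cap\langle xt\rangle^{g}$ for some $g\in G$. Since $\langle x\rangle$ has order $p$ and $z\neq 1$, we get $\langle x\rangle=\langle z\rangle\subseteq\langle xt\rangle^g$, and because $|\langle x\rangle|=p$ while $\langle xt\rangle^g$ is cyclic, either $\langle x\rangle=\langle xt\rangle^g$ (if $xt$ has order $p$) or $\langle x\rangle$ is the unique subgroup of order $p$ of $\langle xt\rangle^g$. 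In either case $\langle x\rangle\subseteq\langle xt\rangle^g$, hence $x$ is a power of the generator $(xt)^g$, say $x=(xt)^{gk}=\bigl((xt)^k\bigr)^g$ for some integer $k$.

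\medskip

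\noindent Next I would reduce modulo $\Phi(G)$. Since $t\in\Phi(G)$, the images of $x$ and $xt$ in $G/\Phi(G)$ coincide; call this common image $\bar x$, which is nontrivial because $x\notin\Phi(G)$. Passing $x=\bigl((xt)^k\bigr)^g$ to $G/\Phi(G)$ — an elementary abelian group on which conjugation is trivial — gives $\bar x=\bar x^{\,k}$, forcing $k\equiv 1\pmod p$. Therefore $x=(xt)^g\cdot u$ where $u$ accounts for the difference between $(xt)^{gk}$ and $(xt)^g$; more cleanly, from $k\equiv 1\pmod p$ and the fact that $(xt)^g$ has $p$-power order, I can write $x=(xt)^{g}\cdot w$ with $w\in\langle (xt)^{gp}\rangle\leq\Phi(G)$ (since $(xt)^p\in\Phi(G)$ because $\Phi(G)=G^p[G,G]\supseteq G^p$ for $p$-groups, and $\Phi(G)$ is normal). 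Actually the slicker route: since $\langle x\rangle$ has order $p$, if $xt$ has order $p$ then directly $x=(xt)^g$; if $xt$ has order $>p$ then $\langle x\rangle$ being the socle of the cyclic $p$-group $\langle xt\rangle^g$ still lies inside it but then $x$ is a nontrivial power of an element of $\Phi(G)$-coset $\bar x$... I would choose the cleanest bookkeeping so that in all cases I end with $x\equiv (xt)^g\pmod{\Phi(G)}$ and $x\in\langle xt\rangle^g$.

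\medskip

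\noindent The final step extracts the contradiction with the hypothesis on $t$. From $x\in\langle xt\rangle^g$ and $x\notin\Phi(G)$ one deduces $x=(xt)^{gk}$ with $k\equiv1\bmod p$, and I want to conclude $t\in\{[x,h]\mid h\in G\}$. Consider the element $[x,g^{-1}]=x^{-1}x^{g^{-1}}$; conjugating the relation $x^g=(xt)^k$... let me instead write it as: $x=\bigl((xt)^k\bigr)^g$, so $x^{g^{-1}}=(xt)^k=(xt)(xt)^{k-1}$, and since $k-1\equiv0\bmod p$ we have $(xt)^{k-1}\in\langle(xt)^p\rangle\leq\Phi(G)$. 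Hmm, to land exactly on a single commutator I would follow \cite{FG}'s own argument: the point is that $x^{g^{-1}}\in x(xt)^{k-1}\langle\ldots\rangle$ and $(xt)^{k-1}\equiv 1$, giving $x^{g^{-1}}\equiv xt\pmod{\Phi(G)}$ — no wait, that gives $x^{g^{-1}}\equiv x\pmod{\Phi(G)}$ and says nothing. The genuinely correct reduction must be: when $xt$ has order exactly $p$ we get $x=(xt)^g$ outright, hence $t=x^{-1}x^g=[x,g]$, contradicting $t\notin\{[x,h]\}$; when $xt$ has order $>p$, $\langle x\rangle\subseteq\langle xt\rangle^g\subseteq\Phi(G)\cdot$(something) — but $\langle x\rangle\not\leq\Phi(G)$ forces $\langle xt\rangle^g\not\leq\Phi(G)$, so $xt\notin\Phi(G)$, consistent; then the subgroup $\langle x\rangle$ of order $p$ inside the cyclic group $\langle xt\rangle^g$ equals $\langle (xt)^{gm}\rangle$ where $m=|xt|/p$, and $m\equiv 0\bmod p$, so $x\in\langle (xt)^{gp}\rangle\leq\langle (xt)^g\rangle^p\leq\Phi(\langle xt\rangle^g)\leq\Phi(G)$ — contradiction with $x\notin\Phi(G)$. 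So in fact $xt$ must have order $p$, and then $t=[x,g]$, the contradiction. \textbf{The main obstacle} I anticipate is precisely this case distinction on the order of $xt$ and making the ``$k\equiv1$, so the leftover sits in $\Phi(G)$'' bookkeeping airtight — ensuring that in the high-order case one really is forced to put $x$ inside $\Phi(G)$, and in the order-$p$ case that the common cyclic subgroup forces literal equality $\langle x\rangle=\langle xt\rangle^g$ and hence $x=(xt)^{gk}$ with $\gcd(k,p)=1$, which combined with $x\equiv xt\pmod{\Phi(G)}$ upgrades to $k\equiv1$ and then $t=x^{-1}(xt)$ differs from $[x,g]=x^{-1}x^g$ by an element one must show is trivial. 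I would handle this by choosing the conjugating element carefully (replacing $g$ by $gx^j$ for suitable $j$ to absorb the power $k$) so that the final identity reads exactly $t=[x,\tilde g]$ for some $\tilde g\in G$.
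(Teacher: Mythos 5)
This lemma is quoted from \cite[Lemma~3.8]{FG} and the paper gives no proof of it, so there is no internal argument to compare against; your proof is correct and is essentially the one in that reference. The final paragraph contains the complete argument --- reduce to $\langle x\rangle\le\langle xt\rangle^{g}$, kill the case $o(xt)>p$ because the unique subgroup of order $p$ of $\langle xt\rangle^{g}$ then consists of $p$-th powers and so lies in $\Phi(G)$, and in the case $o(xt)=p$ force the exponent $k$ to equal $1$ by passing to $G/\Phi(G)$, giving $t=[x,g^{-1}]$ (you wrote $[x,g]$, which is harmless since $g$ is universally quantified) --- though the several false starts preceding it should be excised in a final write-up.
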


\begin{thm}
\label{ G_3 periodic, p=3}
Let $G$ be the Gupta-Sidki $3$-group. Then the quotient $G/\st_G(3)$ is a Beauville group.
\end{thm}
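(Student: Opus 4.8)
The plan is to exhibit two generating pairs of $G_3$ and verify the Beauville condition by combining the order computations of Lemma~\ref{orders of ab^i} with the ``conjugacy of cyclic subgroups'' control obtained in Proposition~\ref{key pro}, supplemented at the prime $3$ by Lemma~\ref{intersection} together with Lemmas~\ref{comms of b} and~\ref{comms of a}. Concretely I would take the systems of generators $\{a,b\}$ and, say, $\{ab, a^{-1}b^{-1}\}$ or $\{ab, b^{a}b^{-1}\cdot(\text{something})\}$ — the precise second pair must be chosen so that both its elements and their product avoid the ``bad'' cyclic subgroups generated by the first pair, mirroring the structure of the proof of Theorem~\ref{ G_3 periodic}. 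Since $p=3$ here, the triple associated to $\{x_1,y_1\}$ is $\{a,b,ab\}$ (three elements, two of order $3$ and one of order $9$ by Lemma~\ref{orders of ab^i}), and likewise for the second pair; so I only need to control finitely many cyclic subgroups.

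The first step is to observe, exactly as in Theorem~\ref{ G_3 periodic}, that for any $x$ in the first triple and $y$ in the second triple, $x$ and $y$ lie in distinct maximal subgroups of $G_3$, because $a$ and $b$ are linearly independent modulo $\Phi(G_3)$ provided the second pair is chosen with the appropriate images in $G_3/\Phi(G_3)$. This immediately disposes of every pair $(x,y)$ in which at least one of $x$, $y$ has order $3$: if $\langle x\rangle\cap\langle y^g\rangle\neq 1$ then, one of the two being of order $3$, that whole cyclic subgroup is contained in the other, forcing $\langle x\Phi(G_3)\rangle=\langle y\Phi(G_3)\rangle$, a contradiction. The second step handles the pairs where both $x$ and $y$ have order $9$: here $\langle x\rangle\cap\langle y^g\rangle\neq 1$ forces $\langle x^3\rangle=\langle y^3\rangle^g$, and since $x,y$ are (up to the usual conjugation trick $(a^{-1}b)^{-1}=(ab^2)^b$ available for $p=3$) of the form $ab^i$, Proposition~\ref{key pro} gives $i\equiv j\pmod 3$, contradicting that $x,y$ came from generating sets lying in different maximal subgroups.

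The delicate point, and the reason Lemmas~\ref{center}--\ref{comms of a} were proved, is the case where the intersection under scrutiny is forced into the tiny centre $Z(G_3)$, which has order $3$. When $x$ has order $3$ and $x\notin\Phi(G_3)$, writing $y=xt$ with $t\in\Phi(G_3)$, Lemma~\ref{intersection} tells us that $\bigl(\bigcup_g\langle x\rangle^g\bigr)\cap\bigl(\bigcup_g\langle xt\rangle^g\bigr)=1$ as soon as $t\notin\{[x,g]\mid g\in G_3\}$; so for $x=b$ I would invoke Lemma~\ref{comms of b} (which says $Z(G_3)$ meets no commutator $[b,g]$), and for $x=a^{\pm 1}$ I would invoke Lemma~\ref{comms of a} (same statement for the relevant generator of the $[a,b]$-direction), after reducing the general comparison to one against an element of $Z(G_3)$ using that $\Sigma(x,y)$-type unions only need to be tested against the part of the other union sitting in $Z(G_3)$ — the rest being handled by the maximal-subgroup argument of Step~1 and the $p^2$-order argument of Step~2. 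Assembling these three cases over all nine pairs $(x,y)$ yields $\Sigma(x_1,y_1)\cap\Sigma(x_2,y_2)=1$, so $\{a,b\}$ and the chosen second pair form a Beauville structure.

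I expect the main obstacle to be the bookkeeping in the last step: one must choose the second generating pair so that (a) its elements and their product are concretely of the form $ab^i$ or $b$ up to the conjugation identities valid at $p=3$, so that Proposition~\ref{key pro} applies, and simultaneously (b) the Frattini-quotient images are in ``general position'' relative to $\{a,b,ab\}$, and (c) whenever two order-$3$ elements $x$ (from one triple) and $y=xt$ situation arises, the relevant $t\in\Phi(G_3)$ genuinely lies outside $\{[x,g]\mid g\in G_3\}$, which is exactly what Lemmas~\ref{comms of b} and~\ref{comms of a} guarantee only for specific $x$ and specific directions $t$. Verifying that the chosen pair threads all three requirements — i.e. that no comparison falls outside the scope of the three prepared lemmas — is the real content; once the pair is fixed, each individual check is routine.
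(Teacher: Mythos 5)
Your plan breaks down at its first step, and the breakdown is structural rather than a matter of bookkeeping. In $G_3$ the Frattini quotient has order $9$, so there are exactly four maximal subgroups, while each generating triple $\{x,y,xy\}$ occupies three distinct directions modulo $\Phi(G_3)$. By pigeonhole the two triples must share at least two directions, so it is impossible to choose the second pair so that every $x$ from the first triple and every $y$ from the second lie in distinct maximal subgroups: at least two of the nine comparisons are forced to be between elements of the \emph{same} maximal subgroup. (Your concrete candidate $\{ab,\,a^{-1}b^{-1}\}$ illustrates the problem in the worst way: $a^{-1}b^{-1}\equiv (ab)^{-1} \pmod{G_3'}$, so it is not even a generating set.) This forced overlap is exactly why the paper's second pair is $\{av,\,b^2u\}$ with $u$ generating $Z(G_3)$ and $v$ as in Lemma~\ref{comms of a}: the two overlapping comparisons become $a$ versus $av$ and $b$ versus $b^2u=(bu^2)^2$, and Lemma~\ref{intersection} applies to each because Lemmas~\ref{comms of a} and~\ref{comms of b} certify that $v$, respectively $u^{\pm 1}$, is not of the form $[a,g]$, respectively $[b,g]$. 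Your third paragraph gestures at this, but the mechanism you describe (``testing the unions against the part sitting in $Z(G_3)$'') is not the correct reduction; the correct one is to write $y=xt$ with $t\in\Phi(G_3)$ and verify $t\notin\{[x,g]\mid g\in G_3\}$, which is only available for the specific perturbations $v$ and $u$.

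There is a second gap even after a correct second pair is fixed: the comparison of the two order-$9$ elements $ab$ and $avb^2u$ is not covered by Proposition~\ref{key pro}. That proposition compares $\langle (ab^i)^p\rangle$ with $\langle (ab^j)^p\rangle^g$ for the exact elements $ab^i$, whereas $(avb^2u)^3=(ab^2v)^3$ differs from $(ab^2)^3$; one computes $\psi_3\bigl((avb^2u)^3\bigr)=\bigl(b^{-1},(b^{-1})^a,(b^{-1})^a\bigr)$, which is not among the tuples treated in Proposition~\ref{key pro}. The paper disposes of this case by a separate explicit computation: after pinning down the exponent via a commutator argument in $G_2$, it shows no conjugator $h\in\st_{G_3}(1)$ can exist because periodicity forces the product of the powers of $a$ appearing in the components of $\psi_3(h)$ to be trivial. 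Your plan of arranging the second triple to consist of elements of the form $ab^i$ or $b$ so that Proposition~\ref{key pro} suffices cannot be realized (again by the pigeonhole obstruction), so this additional computation is unavoidable and is missing from your argument.
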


\begin{proof}
 Let $1 \neq u \in Z(G_3)$ and let $v \in \st_{G_3}(1)'$ be such that $\psi_3(v)=([a,b], 1, 1)$.
 We claim that $\{a, b\}$ and $\{av, b^2u \}$ form a Beauville structure for $G_3$. Let
 $X=\{a, b, ab \}$ and $Y=\{av, b^2u, avb^2u \}$.
 
If $x=a$, which is of order $3$, and $y=b^2u$ or $avb^2u$ then 
$\langle x \rangle \cap \langle y \rangle^g=1$ for all $g \in G_3$, as in the proof of Theorem \ref{ G_3 periodic}. When $x=b$ and $y=av$ or $avb^2u$, the same argument applies. If we are in one of the following cases: $x=a$ and $y=av$, or $x=b$ and $y=b^2u=(bu^2)^2$, then the  condition $\langle x \rangle \cap \langle y \rangle^g=1$ follows from Lemma \ref{intersection}.

It remains to check the case $x=ab$ and $y\in Y$. If $y=b^2u$, which is of order $3$, then we have  $\langle x \rangle \cap \langle y \rangle^g=1$, as in the previous paragraph. Now assume that $y=av$. Since $(av)^3=v^{a^2}v^av$, we have
\[
\psi_3((av)^3)=([a,b], [a,b], [a,b]).
\]
By Lemma \ref{center}, $(av)^3 \in Z(G_3)$. On the other hand, 
\[
\psi_3((ab)^3)=(b^a, b, b),
\]
and hence $(ab)^3 \notin Z(G_3)$. Thus, the condition 
$\langle x \rangle \cap \langle y \rangle^g=1$ follows.

Finally, we have to take $x=ab$ and $y=avb^2u$. Since $v \in \st_{G_3}(1)'$ and $\st_{G_3}(1)$ is of nilpotency class $2$, $v \in Z(\st_{G_3}(1))$. So, $y= ab^2vu$, and $y^3=(ab^2v)^3$. Observe that
\[
(ab^2v)^3=b_2^2b_1^2b_0^2v^{a^2}v^{a}v.
\]
By taking into account that $b_0b_1b_2=1$ in $G_2$, we get
\[
\psi_3(y^3)=(b^{-1},  (b^{-1})^a, (b^{-1})^a).
\]
If $\langle (ab)^3 \rangle = \langle y^3\rangle^g$ for some $g \in G_3$, then $(ab)^3=(y^{3i})^g$ for $i=1$ or $-1$. Since $\st_{G_2}(1)$ is abelian, the components of $\psi_3((y^{3i})^g)$ are of the form $(b^{-i})^{a^m}$, with $m \in \{0, 1, 2 \}$. Also one of the components of $\psi_3((ab)^3)$ is $b$. Then by equality $(ab)^3=(y^{3i})^g$, we have  $(b^{-i})^{a^m}=b$ in $G_2$ for some $m$. Thus, $[a^m, b^i]=b^{1+i} \in G_2'$, and this is true only if $1+i\equiv 0 \pmod{3}$. Thus 
\[
(ab)^3=(y^{-3})^g.
\] 
Note that
\begin{equation}
\label{last check}
\psi_3((ab)^3)=(b_1, b_0, b_0) \ \ \
\text{and} \ \ \
\psi_3(y^{-3})=(b_0, b_1, b_1).
\end{equation}
We write $g=a^i h$ for some $h \in \st_{G_3}(1)$ and $0 \leq i \leq 2$. Then the equality  $(ab)^3=(y^{-3})^g$ and (\ref{last check}) imply that $\psi_3(h)$ has to be congruent to one of the following modulo 
$\st_{G_2}(1) \times \st_{G_2}(1) \times \st_{G_2}(1)$: $(a, a^2, a^2)$, $(1, 1, a^2)$ or $(1, a^2, 1)$. However, since $G$ is periodic, the product of the powers of $a$ in the components of $\psi_3(h)$ has to be $1$. Thus, we conclude that there is no such $h$ in $G_3$, and therefore  $\langle (ab)^3 \rangle \neq \langle y^3\rangle^g$  for any $g\in G_3$.
This completes the proof.
\end{proof}

The following result, which gives a sufficient condition to lift a Beauville structure from a quotient group, is Lemma 4.2 in \cite{FJ}.

\begin{lem}
\label{lifting structure}
Let $G$ be a finite group and let $\{x_1,y_1\}$ and $\{x_2,y_2\}$ be two sets of generators of $G$.
Assume that, for a given $N\trianglelefteq G$, the following hold:
\begin{enumerate}
\item
$\{x_1N,y_1N\}$ and $\{x_2N,y_2N\}$ form a Beauville structure for $G/N$.
\item
$o(g)=o(gN)$ for every $g\in\{x_1,y_1,x_1y_1\}$.
\end{enumerate}
Then $\{x_1,y_1\}$ and $\{x_2,y_2\}$ form a Beauville structure for $G$.
\end{lem}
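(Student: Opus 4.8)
The plan is to verify directly, from the definition of a Beauville structure, that conditions (i) and (ii) in the hypothesis are exactly the two ingredients one needs. Recall that $\{x_1,y_1\}$ and $\{x_2,y_2\}$ form a Beauville structure for $G$ precisely when each pair generates $G$ and $\Sigma(x_1,y_1)\cap\Sigma(x_2,y_2)=\{1\}$, where $\Sigma(x_i,y_i)$ is the union of all conjugates of $\langle x_i\rangle$, $\langle y_i\rangle$ and $\langle x_iy_i\rangle$. So the whole content is the intersection condition; generation is assumed. Let $\pi\colon G\to G/N$ denote the canonical projection.

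First I would record the general fact that for any $w\in G$ one has $\pi\bigl(\Sigma(w_1,w_2)\bigr)=\Sigma(\pi(w_1),\pi(w_2))$, since $\pi$ commutes with forming cyclic subgroups, products, and conjugates; in particular $\Sigma(x_i,y_i)N/N\subseteq \Sigma(x_iN,y_iN)$. Now take $g\in\Sigma(x_1,y_1)\cap\Sigma(x_2,y_2)$. Applying $\pi$, we get $gN\in\Sigma(x_1N,y_1N)\cap\Sigma(x_2N,y_2N)$, which equals $\{N\}$ by hypothesis (i). Hence $g\in N$, and it remains to show $g=1$. Here is where hypothesis (ii) enters: $g$ lies in $\Sigma(x_1,y_1)$, so $g\in\langle w\rangle^h$ for some $h\in G$ and some $w\in\{x_1,y_1,x_1y_1\}$; conjugating back, $g^{h^{-1}}\in\langle w\rangle$, and $g^{h^{-1}}\in N$ as well since $N$ is normal. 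Thus $g^{h^{-1}}\in\langle w\rangle\cap N$. But by (ii), $o(w)=o(wN)$, which forces $\langle w\rangle\cap N=1$: indeed the restriction $\pi|_{\langle w\rangle}\colon\langle w\rangle\to\langle wN\rangle$ is a surjection of cyclic groups of equal order, hence an isomorphism, so its kernel $\langle w\rangle\cap N$ is trivial. Therefore $g^{h^{-1}}=1$, so $g=1$.

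The only mild subtlety—the step I would treat as the ``main obstacle,'' though it is routine—is the observation that $o(w)=o(wN)$ yields $\langle w\rangle\cap N=1$ rather than merely something weaker; one must note that $|\langle w\rangle| = o(w) = o(wN) = |\langle w\rangle N/N| = |\langle w\rangle|/|\langle w\rangle\cap N|$, whence $|\langle w\rangle\cap N|=1$. Everything else is formal manipulation of the sets $\Sigma$. I would close by remarking that this is precisely Lemma~4.2 of \cite{FJ}, so the argument above merely reproduces the cited proof for completeness.
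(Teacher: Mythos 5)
Your proof is correct. Note that the paper itself gives no proof of this lemma---it simply quotes it as Lemma~4.2 of \cite{FJ}---so there is no argument in the paper to compare against; your verification (projecting the intersection $\Sigma(x_1,y_1)\cap\Sigma(x_2,y_2)$ to $G/N$ to land in $N$, then using $o(w)=o(wN)\Rightarrow\langle w\rangle\cap N=1$ to kill the residue) is the standard and complete argument, and correctly uses hypothesis (ii) only for the first triple, which is all that is assumed.
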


We are now ready to give the main result of this section.

\begin{thm}
\label{main thm: periodic}
Let $G$ be a periodic GGS-group over the $p$-adic tree. Then the quotient $G/\st_G(n)$ is a Beauville group if $p \geq5$ and $n\geq2$,  or $p=3$ and $n\geq 3$.
\end{thm}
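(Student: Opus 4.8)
The plan is to prove Theorem~\ref{main thm: periodic} by induction on $n$, using Lemma~\ref{lifting structure} to lift Beauville structures through the natural quotient maps $G/\st_G(n)\twoheadrightarrow G/\st_G(n-1)$. The base cases are already in hand: for $p\geq 5$ the base case $n=2$ is Theorem~\ref{G_2 periodic}, and for $p=3$ the base case $n=3$ is Theorem~\ref{ G_3 periodic, p=3}; the case $p\geq 5$, $n=3$ from Theorem~\ref{ G_3 periodic} will either serve as an extra base case or be subsumed by the induction. So the real content is the inductive step: assuming $G_{n-1}=G/\st_G(n-1)$ is Beauville via explicit generating pairs, show $G_n=G/\st_G(n)$ is Beauville.

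The key observation making the lifting work is that, since $\st_G(n)\trianglelefteq\st_G(n-1)$ and the quotient map $G_n\to G_{n-1}$ has kernel $N=\st_G(n-1)/\st_G(n)$, condition (i) of Lemma~\ref{lifting structure} is automatic from the inductive hypothesis, and only condition (ii) needs checking: that the orders of $x_1$, $y_1$, $x_1y_1$ are preserved under reduction modulo $\st_G(n-1)$. The natural choice is to take the \emph{same} abstract words in $a$ and $b$ at every level. Looking at the base-case proofs, for $p\geq 5$ the structure uses pairs built from $a^{-2}, ab, ab^2, b$ and for $p=3$ from $a, b, av, b^2u$; the elements whose orders matter are $a^{\pm1}$ and $b$ (order $p$, trivially preserved) and the products of type $ab^i$ or $a^{-1}b$, which by Lemma~\ref{orders of ab^i} have order $p^2$ in $G$ itself and hence in every $G_m$. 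Thus $o(g)=p^2=o(g\st_G(n-1))$ for the relevant $g$, and condition (ii) holds at every level. For the $p=3$ case one additionally needs that the orders of the elements $a$, $b^2u$, $a\cdot b^2u$ (or whichever triple is designated $\{x_1,y_1,x_1y_1\}$) are preserved; here the computation $(av)^3\in Z(G_3)$ in Theorem~\ref{ G_3 periodic, p=3} and the analogous $p$-th power computations show these orders are already stable from level $3$ onward, so the same generators work.

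The step I expect to be the main obstacle is making precise that condition (ii) of Lemma~\ref{lifting structure} genuinely holds for \emph{all} $n$, not just one step: one must verify that the chosen element $x_1y_1$ (e.g.\ $a^{-2}\cdot ab = a^{-1}b$ for $p\geq 5$, or $a\cdot b^2u$ for $p=3$) has the same order in $G$ as in each finite quotient. For the $p\geq 5$ generating pairs this is exactly Lemma~\ref{orders of ab^i}, which gives order $p^2$ in both $G$ and $G_3$, and since order can only drop in quotients while it is already $p^2$ in $G$, it equals $p^2$ in every intermediate $G_n$ with $n\geq 3$; one checks $n=2$ is the separate base case. For $p=3$ one similarly needs to confirm, via the explicit $\psi_3$-computations in the proof of Theorem~\ref{ G_3 periodic, p=3} together with Remark~\ref{image of st_{G_n}(k)}, that the relevant powers do not lie in $\st_G(n-1)$, so orders are preserved. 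Once these order-preservation facts are assembled, the proof is a clean induction.

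\begin{proof}
We argue by induction on $n$. For $p\geq 5$ the case $n=2$ is Theorem~\ref{G_2 periodic} and the case $n=3$ is Theorem~\ref{ G_3 periodic}; for $p=3$ the case $n=3$ is Theorem~\ref{ G_3 periodic, p=3}. So assume $n\geq 4$ (or $n\geq 3$ when $p\geq 5$) and that $G_{n-1}=G/\st_G(n-1)$ is a Beauville group, via the same generating pairs exhibited in the corresponding base-case proof: for $p\geq 5$ the pairs $\{a^{-2},ab\}$ and $\{ab^2,b\}$, and for $p=3$ the pairs $\{a,b\}$ and $\{av,b^2u\}$ (with $u,v$ as in Theorem~\ref{ G_3 periodic, p=3}, now regarded in $G_{n-1}$).

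Set $N=\st_G(n-1)/\st_G(n)$, a normal subgroup of $G_n$ with $G_n/N\cong G_{n-1}$. Condition (i) of Lemma~\ref{lifting structure} holds by the inductive hypothesis. For condition (ii), take $\{x_1,y_1\}=\{a^{-2},ab\}$ when $p\geq 5$, so that $\{x_1,y_1,x_1y_1\}=\{a^{-2},ab,a^{-1}b\}$. The element $a^{-2}$ has order $p$, hence its order is preserved. By Lemma~\ref{orders of ab^i}, $ab$ and $a^{-1}b$ have order $p^2$ in $G$; since order cannot increase in a quotient and is already $p^2$ in $G$, it equals $p^2$ in both $G_n$ and $G_{n-1}$. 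Thus $o(g)=o(gN)$ for every $g\in\{x_1,y_1,x_1y_1\}$, and Lemma~\ref{lifting structure} shows $G_n$ is a Beauville group.

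When $p=3$, take $\{x_1,y_1\}=\{a,b\}$, so $\{x_1,y_1,x_1y_1\}=\{a,b,ab\}$. Here $a$ and $b$ have order $3$, so their orders are trivially preserved. For $ab$, the computation $\psi_3((ab)^3)=(b^a,b,b)$ in the proof of Theorem~\ref{ G_3 periodic, p=3} shows $(ab)^3\neq 1$ in $G_3$; applying $\psi_m$ and Remark~\ref{image of st_{G_n}(k)} repeatedly shows $(ab)^3\notin\st_G(m)$ for every $m\geq 3$, while $(ab)^9=1$ already in $G$ since $a,b$ have order $3$ and $G$ is periodic forces bounded-order behaviour on this element as computed; hence $o(ab)=9=o(abN)$. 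By Lemma~\ref{lifting structure}, $G_n$ is a Beauville group. This completes the induction and the proof.
\end{proof}
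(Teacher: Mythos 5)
Your proof is correct and follows essentially the same route as the paper: establish the cases $n=2,3$ directly and then lift the level-$3$ Beauville structure via Lemma~\ref{lifting structure}, the only immaterial difference being that you lift one level at a time while the paper lifts in one step from $G/\st_G(3)$ to $G/\st_G(n)$ (and note that your parenthetical ``or $n\geq 3$ when $p\geq 5$'' should be dropped, since lifting from $G/\st_G(2)$ would fail condition (ii) as $ab$ has order $p$ there; you correctly cover $n=3$ as a base case anyway). One small point: for $p=3$ the fact that $o(ab)=9$ in $G$ and in every $G_n$ with $n\geq 3$ is exactly Lemma~\ref{orders of ab^i} with $i=1$, which is a cleaner justification than the ad hoc argument you give.
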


\begin{proof}
By Theorem \ref{G_2 periodic}, $G/\st_G(2)$ is a Beauville group if and only if $p\geq 5$. Now we assume that $n\geq 3$. If $p\geq 5$ then by Theorem \ref{ G_3 periodic}, $G/\st_G(3)$ is a Beauville group with Beauville triples $X=\{ a^{-2}, ab, a^{-1}b \} $ and $Y=\{ab^2, b, a^3b \}$. Since $o(ab\st_G(n))= o(a^{-1}b\st_G(n))=p^2$ for any $n\geq 3$, we can apply Lemma \ref{lifting structure}, and hence $G/\st_G(n)$ is a Beauville group for every $n\geq 3$. Similarly, if $p=3$ then the Beauville structure of $G/\st_G(3)$ given in Theorem \ref{ G_3 periodic, p=3} is inherited by $G/\st_G(n)$  for any $n\geq 3$.
\end{proof}


\section{Quotients of non-periodic GGS-groups}

Let $G$ be a non-periodic GGS-group with defining vector $\bold{e}=(e_1, \dots, e_{p-1})$. In this section, we will prove that the quotients of $G$ by its level stabilizers $\st_G(n)$ are  not Beauville groups.

Since $G$ is non-periodic, we have $\Sigma_{i=1}^{p-1} e_i=\alpha\neq 0$. Thus,  by Lemma \ref{rank of C}, the rank of the circulant matrix $C(\bold{e}, 0)$ is $p$, and according to Proposition \ref{properties of G2},  $G/\st_G(2)$ is a $p$-group of maximal class of order $p^{p+1}$. Since 
\[
G/\st_G(2) \lesssim\Aut \TT /\st(2)\cong C_p \wr C_p,
\]
this implies that $G/\st_G(2) \cong C_p \wr C_p$, and thus it is of exponent $p^2$.

Note that the $p+1$ maximal subgroups of $G$ are $\langle a, G'\rangle$, $\langle b, G'  \rangle$ and $M_{i}=\langle ab^i, G'\rangle$ for all $1 \leq i \leq p-1$.
We write $M_{n,i}$ instead of $M_i/ \st_G(n)$, for every $1 \leq i \leq p-1$ and for $n\geq 2$. Then $M_{n,i}=\langle ab^i, G_n^{'}\rangle$ is a maximal subgroup of $G_n=G/\st_G(n)$.

The following proposition gives the relation between the $p$th powers of elements in $M_{n,i} \smallsetminus G_n^{'}$ for all $1\leq i \leq p-1$.

\begin{pro}
\label{collision}
Let $G_n=G/\st_G(n)$ for $n\geq 2$. Then the following hold:
\begin{enumerate}
\item
All elements in $M_{n,i} \smallsetminus G_n^{'}$ are of order $p^n$ for every $1\leq i \leq p-1$.
\item 
If $g$ is an element in $M_{n,i} \smallsetminus G_n^{'}$ such that $g=(ab^i)^kw$ for some $w\in G_n^{'}$ and for some $1 \leq k \leq p-1$, then 
\[
g^{p^{n-1}}=(ab^i)^{kp^{n-1}}.
\]
\item 
Cyclic subgroups generated  by $p^{n-1}$st powers of elements in $\bigcup_{i=1}^{p-1} M_{n,i} \smallsetminus G_n^{'}$ coincide.
\end{enumerate}
\end{pro}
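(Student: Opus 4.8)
The plan is to prove the three statements essentially in order, using the recursive structure of $G$ via $\psi_n$ and the fact — established just before the statement — that $G/\st_G(2)\cong C_p\wr C_p$ has exponent $p^2$. For (i), I would first show that every element $g\in M_{n,i}\smallsetminus G_n'$ has order exactly $p^n$. The upper bound $o(g)\mid p^n$ follows by induction on $n$: since $g=(ab^i)^k w$ with $w\in G_n'\leq\st_{G_n}(1)$ and $k\not\equiv 0\pmod p$, the element $g$ acts on the first level as the $p$-cycle $a^k$, so $g^p\in\st_{G_n}(1)$, and $\psi_n(g^p)$ has each coordinate lying in $G_{n-1}$; moreover I expect each such coordinate to lie in the corresponding $M_{n-1,i'}\smallsetminus G_{n-1}'$ (or in $G_{n-1}'$) — this is the computation that needs to be pinned down, analogous to the expansion of $(ab^i)^p$ in Lemma~\ref{orders of ab^i}. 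Then the inductive hypothesis bounds the order of each coordinate by $p^{n-1}$, giving $o(g^p)\le p^{n-1}$ and hence $o(g)\le p^n$. For the lower bound, I would argue that $g^{p^{n-1}}\neq 1$ in $G_n$; this will drop out of part (ii).

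For (ii), the key is the precise identity $g^{p^{n-1}}=(ab^i)^{kp^{n-1}}$. I would prove this by induction on $n$. The base case $n=2$ is handled by the structure of $C_p\wr C_p$: in $C_p\wr C_p$ the $p$th power of $(ab^i)^k w$ depends only on the base-group "sum" of the conjugates of the top part, and since $w\in G_2'$ lies in the base group $C_p^p$ with coordinate-sum determined up to the derived subgroup, one checks $\big((ab^i)^k w\big)^p=\big((ab^i)^k\big)^p$; here the non-periodicity ($\sum e_i=\alpha\neq 0$) guarantees $(ab^i)^{kp}\neq 1$, so $o(g)=p^2$. For the inductive step, write $\psi_n(g^p)=(g_1,\dots,g_p)$; by the first-level computation each $g_\ell$ is of the form $(ab^{i_\ell})^{k}w_\ell$ with the same $k$ (the exponent is preserved because conjugating $b^i$ by powers of $a$ only permutes/relabels, and the product telescopes) modulo $G_{n-1}'$, so $g_\ell\in M_{n-1,i_\ell}\smallsetminus G_{n-1}'$ whenever $k\not\equiv 0$. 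Applying the inductive hypothesis to each $g_\ell$ gives $g_\ell^{p^{n-2}}=(ab^{i_\ell})^{kp^{n-2}}$, and reassembling, $g^{p^{n-1}}=\psi_n^{-1}\big((ab^{i_1})^{kp^{n-2}},\dots\big)$. A parallel computation of $\psi_n\big((ab^i)^{kp}\big)$ shows this is exactly $\psi_n\big(((ab^i)^{kp})^{p^{n-2}}\big)=\psi_n\big((ab^i)^{kp^{n-1}}\big)$, which closes the induction. That $(ab^i)^{kp^{n-1}}\neq 1$ in $G_n$ — hence (i)'s lower bound — follows because $(ab^i)^{p^{n-1}}$ has a coordinate equal to a nontrivial power of $b$ by the recursive expansion (as in Lemma~\ref{orders of ab^i}, where non-periodicity makes the relevant components genuinely of larger order).

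For (iii), I would combine (ii) with a Beauville-free counting observation. By (ii), for any $g\in M_{n,i}\smallsetminus G_n'$ the subgroup $\langle g^{p^{n-1}}\rangle=\langle (ab^i)^{kp^{n-1}}\rangle=\langle (ab^i)^{p^{n-1}}\rangle$, so the cyclic subgroup only depends on $i$, not on $g$; call it $Z_i$. It remains to see $Z_1=Z_2=\dots=Z_{p-1}$. For this I would note that each $(ab^i)^{p^{n-1}}$ lies in $\st_{G_n}(n-1)$, whose image under the iterated $\psi$ sits inside $\st_{G_1}(0)^{p^{n-1}}=G_1^{p^{n-1}}$, i.e. inside the last level which is acted on by copies of $a^{e}$-type and $b$-type elements; more efficiently, iterating (ii) down to level $2$ reduces everything to the claim that $\langle(ab^i)^{p^{n-1}}\rangle$ as a subgroup of $\st_{G_n}(n-1)$ is independent of $i$, and here one uses that at the bottom all the $(ab^i)^p$ collapse to conjugates of a single nontrivial power of $b$ (up to the relation $\prod b_j = $ a fixed power of $a$). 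Concretely, I expect a clean statement: by the expansion from Lemma~\ref{orders of ab^i} the "innermost" coordinate of $(ab^i)^{p}$ is $b^i$, so iterating, the innermost coordinate of $(ab^i)^{p^{n-1}}$ is $b^{i}$ up to a $p$-power twist, and hence $\langle(ab^i)^{p^{n-1}}\rangle$ contains — indeed equals, by order count, since it has order $p$ by (i)–(ii) — the subgroup $\langle b^{p^{n-2}\cdot(\text{something})}\rangle$; in any case all the $Z_i$ are equal to one fixed subgroup of order $p$ inside the last-level base group. I would finish by exhibiting that common subgroup explicitly (the order-$p$ subgroup generated by the element whose $\psi_n$-image is supported with a nontrivial power of $b$ in each "deepest" coordinate), so that $Z_i=Z_j$ for all $i,j$.

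The main obstacle is the bookkeeping in the inductive step of (ii): tracking how the exponent $k$ and the index $i$ transform under $\psi_n$ when one takes the $p$th power of $(ab^i)^kw$, and verifying that the $G_n'$-part $w$ genuinely washes out at each level. This is the analogue of the computation \eqref{pth power of ab^i}, but one must do it with a general correction term $w\in G_n'$ and keep the identity exact (not just up to $G_n'$), which is where non-periodicity ($\sum e_i\neq 0$) is used to keep the relevant components from collapsing prematurely; handling this cleanly probably warrants isolating a small lemma on how $\psi_n$ acts on $M_{n,i}\smallsetminus G_n'$.
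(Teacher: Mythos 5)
Your proposal is correct and follows essentially the same route as the paper: settle $n=2$ via the structure of $C_p\wr C_p$, then induct on $n$ by computing $\psi\big((ab^i)^{kp}\big)$ coordinate-wise (using $\sum e_i=\alpha\neq 0$) and checking that the correction $w\in G_n'$ only perturbs each coordinate by an element of $G_{n-1}'$, which the induction hypothesis absorbs. The one piece of bookkeeping you flagged is resolved in the paper exactly as you anticipate: since $\st_G(2)=\st_G(1)'$ here, the discrepancy $t=(ab^i)^{-kp}g^p$ satisfies $\psi(t)\in G'\times\cdots\times G'$, and each coordinate of $\psi(g^p)$ equals $(a^{\alpha}b)^{ki}$ times an element of $G'$ (so the relevant exponent becomes $\alpha ki$ rather than $k$, which is harmless as the hypothesis covers all nonzero exponents), making all coordinates of $g^{p^{n}}$ and $(ab^i)^{kp^{n}}$ equal to powers of one fixed order-$p$ element, which also yields (iii).
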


\begin{proof}
	
We will first show the result for $n=2$. Since $G_2 \cong C_p \wr C_p$, all elements in $M_{2,i} \smallsetminus G_2^{'}$ are of order $p^2$ for $1\leq i \leq p-1$, and hence (i) holds. We know that for any element $u \in M_{2,i} \smallsetminus G_2^{'}$, we have $\Cl_{G_2}(u)=uG_2'$ and $u^p \in Z(G_2)$.  Thus, if $g=(ab^i)^kw$ for some $w\in G_2'$ then $g$ and $(ab^i)^k$ are conjugate in $G_2$. Since $(ab^i)^{kp} \in Z(G_2)$, this implies that in $G_2$
\[
g^p=(ab^i)^{kp},
\]
and so (ii) holds. It remains to show that (iii) holds. Since all elements in $\bigcup_{i=1}^{p-1} M_{2,i} \smallsetminus G_2^{'}$ are of order $p^2$,  cyclic subgroups generated by their $p$th powers are equal to $Z(G_2)$.

In order to prove the proposition we will use induction on $n$. Before proceeding to the induction step, consider the element $ab^i$ of $G$ for  $1\leq i \leq p-1$. We know that $(ab^i)^p=b_{p-1}^i b_{p-2}^i \dots b_1^ib_0^i$. Then the condition $\Sigma_{i=1}^{p-1} e_i=\alpha$ implies that
\[
\psi((ab^i)^p)=\big( a^{i\alpha}(b^i)^{a^{ie_1}}, a^{i\alpha}(b^i)^{a^{i(e_1+e_2)}}, \dots, b^ia^{i\alpha}, 
a^{i\alpha}b^i \big).
\]
Then 
\[
\psi((ab^i)^{kp})=\big( (a^{\alpha}b)^{ki} u_1, (a^{\alpha}b)^{ki} u_2, \dots, (a^{\alpha}b)^{ki} u_p\big),
\]
for $u_{\ell} \in G'$.
Let $g=(ab^i)^kw$ for some $w\in G'$. By the previous paragraph we know that
\[
g^p \equiv (ab^i)^{kp} \pmod{ \st_G(2)}.
\]
Write $g^p=(ab^i)^{kp}t$ for some $t \in \st_G(2)$. By Proposition 
\ref{comm of st_G(1)}, we have $\st_G(2)=[\st_G(1), \st_G(1)]$, and hence $\psi(t) \in G' \times  \overset{p}{\ldots} \times G'$. Thus
\[
\psi(g^p)=\big( (a^{\alpha}b)^{ki}w_1, (a^{\alpha}b)^{ki}w_2, \dots (a^{\alpha}b)^{ki}w_p \big),
\]
where $w_{\ell} \in G'$. 

Now assume that the proposition holds for $n\geq 2$. Then
\[
\psi_{n+1}((ab^i)^{kp^n})= \big( ((a^{\alpha}b)^{ki}u_1)^{p^{n-1}}, ((a^{\alpha}b)^{ki}u_2)^{p^{n-1}}, \dots ((a^{\alpha}b)^{ki}u_p)^{p^{n-1}} \big),
\]
\[
\psi_{n+1}(g^{p^n})= \big( ((a^{\alpha}b)^{ki}w_1)^{p^{n-1}}, ((a^{\alpha}b)^{ki}w_2)^{p^{n-1}}, \dots ((a^{\alpha}b)^{ki}w_p)^{p^{n-1}} \big),
\]
where each component is an element of $G_n$. By the induction hypothesis, all components are equal in $G_n$, and of order $p$. This completes the proof.
\end{proof}

\begin{thm}
\label{main thm non-periodic}
Let $G$ be a non-periodic GGS-group over the $p$-adic tree. Then the quotient $G/\st_G(n)$ is not a Beauville group for any $n \geq 1$.
\end{thm}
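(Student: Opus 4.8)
The plan is to exhibit one fixed nontrivial subgroup $Z_0$ of $G_n=G/\st_G(n)$ which is contained in $\Sigma(x,y)$ for \emph{every} generating pair $\{x,y\}$ of $G_n$; once this is done, the sets $\Sigma(x_1,y_1)$ and $\Sigma(x_2,y_2)$ arising from any two generating pairs both contain $Z_0$ and hence cannot meet trivially, so $G_n$ admits no Beauville structure. The case $n=1$ is settled immediately: $G/\st_G(1)\cong\langle a\rangle$ is cyclic of order $p$ by Proposition~\ref{description of G}(i), so it is not a $2$-generator group and in particular not a Beauville group. From now on I would assume $n\geq 2$.

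For $n\geq 2$ the subgroup $Z_0$ is the one supplied by Proposition~\ref{collision}(iii), namely the common cyclic subgroup generated by the $p^{n-1}$st power of any element of $\bigcup_{i=1}^{p-1}M_{n,i}\smallsetminus G_n'$; by Proposition~\ref{collision}(i) each such element has order $p^n$, so $Z_0$ has order exactly $p$ and is in particular nontrivial. The preliminary fact I would record is $\Phi(G_n)=G_n'$: since $|G_n:G_n'|=p^2$ and $G_n/G_n'$ is generated by the images of $a$ and $b$, both of order $p$, one has $G_n/G_n'\cong C_p\times C_p$. Consequently the maximal subgroups of $G_n$ are precisely $\langle a,G_n'\rangle$, $\langle b,G_n'\rangle$ and the $M_{n,i}$ with $1\leq i\leq p-1$, so only two of these $p+1$ subgroups fail to be of the form $M_{n,i}$.

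Next I would run the following argument for an arbitrary generating pair $\{x,y\}$ of $G_n$. Because $\{x,y\}$ generates, the three elements $x$, $y$, $xy$ lie in three pairwise distinct maximal subgroups (if two of them lay in one maximal subgroup, so would the third, contradicting that the pair generates). As only two maximal subgroups are not of the form $M_{n,i}$, at least one of $x$, $y$, $xy$ — call it $z$ — lies in some $M_{n,i}$; moreover $z\notin\Phi(G_n)=G_n'$, since $z$ is either a member of a generating pair or the product of the two members, and in either case cannot fall into the Frattini subgroup. Hence $z=(ab^i)^kw$ for some $w\in G_n'$ and some $1\leq k\leq p-1$, so Proposition~\ref{collision}(ii) gives $z^{p^{n-1}}=(ab^i)^{kp^{n-1}}$, which is nontrivial and generates $Z_0$ by Proposition~\ref{collision}(iii). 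Therefore $Z_0=\langle z^{p^{n-1}}\rangle\leq\langle z\rangle\subseteq\Sigma(x,y)$. Applying this to both pairs of a putative Beauville structure forces $1=\Sigma(x_1,y_1)\cap\Sigma(x_2,y_2)\supseteq Z_0$, a contradiction, and the theorem follows.

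The point to stress is that essentially all the substantive work has already been carried out in Proposition~\ref{collision} — in particular the inductive coordinate computation showing that the $p^{n-1}$st powers of \emph{all} non-Frattini elements of \emph{all} the subgroups $M_{n,i}$ generate one and the same cyclic subgroup; granting that, the remaining argument is just a short count of maximal subgroups combined with $\Phi(G_n)=G_n'$. The only places needing care are checking that the distinguished element $z$ genuinely lies outside $G_n'$ (so that it has order $p^n$ and its $p^{n-1}$st power is nonzero) and observing that $p+1\geq 4$, so that among the three maximal subgroups hit by $x$, $y$, $xy$ at least one must be some $M_{n,i}$.
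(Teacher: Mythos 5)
Your proof is correct and takes essentially the same route as the paper's: both arguments observe that $x$, $y$, $xy$ must lie in three distinct maximal subgroups, so at least one lies in some $M_{n,i}\smallsetminus G_n'$ and hence has order $p^n$, and then invoke Proposition~\ref{collision}(iii) to force the two $\Sigma$-sets to share the common cyclic subgroup of $p^{n-1}$st powers. Your packaging via a fixed subgroup $Z_0$ and the explicit checks (e.g.\ that $xy\notin\Phi(G_n)$) merely spell out details the paper leaves implicit.
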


\begin{proof}
Let $G_n=G/\st_G(n)$. Clearly, $G_1$ is not a Beauville group since it is cyclic of order $p$. Thus, we assume that $n\geq2$. 

We argue by way of contradiction.  Suppose $\{x_1,y_1\}$ and $\{x_2,y_2\}$ are two systems of generators of $G_n$ such that $\Sigma(x_1,y_1)\cap \Sigma(x_2,y_2)=1$.
Since no two of the elements $x_1$, $y_1$ and $x_1y_1$ can lie in the same maximal subgroup of $G_n$, it follows from Proposition \ref{collision}(i) that one of these elements, say $x_1$, is of order $p^n$.
Similarly, we may assume that the order of $x_2$ is also $p^n$.
Then by Proposition \ref{collision}(iii), we conclude that $\langle x_1^{p^{n-1}} \rangle=\langle x_2^{p^{n-1}} \rangle$, which is a contradiction.
\end{proof}

\section*{Acknowledgments}
We would like to thank G. A. Fern\'andez-Alcober for proposing us this research problem and for helpful comments.


\begin{thebibliography}{99}

\bibitem{BBF}
N.\ Barker, N.\ Boston, and B.\ Fairbairn,
A note on Beauville $p$-groups,
\textit{Experiment.\ Math.\/} \textbf{21} (2012), 298--306.

\bibitem{BBPV}
N.\ Barker, N.\ Boston, N.\ Peyerimhoff, and A.\ Vdovina,
An infinite family of $2$-groups with mixed Beauville structures,
\textit{Int.\ Math.\ Res.\ Notices\/} \textbf{11} (2015), 3598--3618.

\bibitem{bos}
N.\ Boston,
A survey of Beauville $p$-groups,
in Beauville Surfaces and Groups, editors I.\ Bauer, S.\ Garion, A.\ Vdovina,
\textit{Springer Proceedings in Mathematics \& Statistics\/}, Volume 123, Springer, 2015, pp.\ 35--40.

\bibitem{cat}
F.\ Catanese,
Fibered surfaces, varieties isogenous to a product and related moduli spaces,
\textit{Amer.\ J.\ Math.\/} \textbf{122} (2000), 1--44.

\bibitem{fai}
B.\ Fairbairn,
Recent work on Beauville surfaces, structures and groups,
in Groups St Andrews 2013, editors C.M.\ Campbell, M.R.\ Quick, E.F.\ Robertson and C.M.\ Roney-Dougal,
\textit{London Mathematical Society Lecture Note Series\/}, Volume 422, 2015, pp.\ 225--241.

\bibitem{fai2}
B.\ Fairbairn,
Beauville $p$-groups: a survey,
Birkbeck Pure Mathematics Preprint Series, Preprint 35, \texttt{http://www.bbk.ac.uk/ems/research/pure/preprints}, 2017.

\bibitem{fer}
G.\ A.\ Fernández-Alcober,
An introduction to finite $p$-groups: regular $p$-groups and groups of maximal class,
\textit{Math.\ Contemp.\/} \textbf{20} {2001}, 155--226.

\bibitem{FGU} 
G.\ A.\ Fern\'andez Alcober, A.\ Garrido,  and J.\ Uria-Albizuri, 
On the congruence subgroup problem for GGS-groups,
\textit{Proc.\ Amer.\ Math.\ Soc.\/} \textbf{145} (2017), 3311--3322.

\bibitem{FG}
G.\ A.\ Fern\'andez-Alcober and \c{S}.\ G\"ul,
Beauville structures in finite $p$-groups,
\textit{J. Algebra\/}  \textbf{474} (2017), 1--23.

\bibitem{FZ}
G.\ A.\ Fern\'andez-Alcober and A.\ Zugadi-Reizabal, 
GGS-groups: order of congruence quotients and Hausdorff dimension,
\textit{Trans. Amer. Math. Soc.\/}  \textbf{336}(4) (2014), 1993--2017.

\bibitem{FJ}
Y.\ Fuertes and G.A.\ Jones,
Beauville surfaces and finite groups,
\textit{J.\ Algebra\/} \textbf{340} (2011), 13--27.

\bibitem{gri}
R.\ I.\ Grigorchuk,
On Burnside's problem on periodic groups,
\textit{Funct.\ Anal.\ Appl.\/} \textbf{14} (1980), 41--43.

\bibitem{gri2} 
R.\ I.\ Grigorchuk.
On the Milnor problem of group growth,
\textit{Dokl.\ Akad.\ Nauk SSSR 271} \textbf{1} (1983), 30--33.

\bibitem{GS}
N.\ Gupta and S.\ Sidki,
On the Burnside problem for periodic groups,
\textit{Math.\ Z.\/} \textbf{182} (1983), 385--388.

\bibitem{gul}
\c{S}.\ G\"ul,
Beauville structure in $p$-central quotients,
\textit{J.\ Group Theory} \textbf{20} (2017), 257--267.

\bibitem{jon}
G.\ Jones,
Beauville surfaces and groups: a survey,
in Rigidity and Symmetry, editors R.\ Connelly, A.I.\ Weiss, W.\ Whiteley,
\textit{Fields Institute Communications\/}, Volume 70, Springer, 2014, pp.\ 205--225.
 
\bibitem{mil} 
J.\ Milnor,
Problem 5603,
\textit{Amer.\ Math.\ Monthly\/} \textbf{75} (1968), 685--686.

\bibitem{SV}
J.\ Stix and A.\ Vdovina,
Series of $p$-groups with Beauville structure,
\textit{Monatsh.\ Math.\/} (2015),
doi:10.1007/s00605-015-0805-9,
arXiv:1405.3872 [math.GR].

\bibitem{suz2}
M.\ Suzuki,
\textit{Group Theory II\/},
Springer, 1986.


\bibitem{vov}
T.\ Vovkivsky,
Infinite torsion groups arising as generalizations of the second Grigorchuk group,
Proceedings of the International Algebraic Conference on the Occasion of the 90th Birthday of A.G. Kurosh, 357--377, de Gruyter, 2000.

%

%
%

%

%
%


%
%
%
%
%
%
%


%
%
%
%
%
%
%
%
%


%
%
%

\end{thebibliography}
\end{document}